\documentclass[a4paper]{article}
\usepackage[margin=1.2in]{geometry}
\usepackage{amsthm}
\usepackage{amssymb}
\usepackage{amsfonts}
\usepackage{subfigure}
\usepackage{amsmath,mathrsfs,bm}
\usepackage{array}
\usepackage{color,soul}
\usepackage{overpic}
\usepackage{ctable}
\usepackage{mathtools}
\usepackage{multirow}
\usepackage{authblk}

\newtheorem{theorem}{Theorem}[section]
\newtheorem{lemma}[theorem]{Lemma}
\newtheorem{definition}[theorem]{Definition}
\newtheorem{remark}[theorem]{Remark}

\numberwithin{equation}{section}

\begin{document}
\title{Two 11-node nonconforming triangular prism elements for 3D elliptic problems\thanks{This project is supported by NNSFC (Nos.~61733002, 61572096, 61432003, 61720106005, 61502107)
and ``the Fundamental Research Funds for the Central Universities''. }}

\author[a]{Xinchen Zhou\thanks{Corresponding author: dasazxc@gmail.com}}
\author[b]{Zhaoliang Meng}
\author[c]{Xin Fan}
\author[b,c]{Zhongxuan Luo}

\affil[a]{\it \small Faculty of Electronic Information and Electrical Engineering, Dalian University of Technology, Dalian 116024, China}
\affil[b]{\it \small School of Mathematical Sciences, Dalian
University of Technology, Dalian, 116024, China}
\affil[c]{\it School of Software, Dalian
University of Technology, Dalian, 116620, China}

\maketitle
\begin{abstract}
This work introduces two 11-node triangular prism elements for 3D elliptic problems.
The degrees of freedom (DoFs) of both elements are at the vertices and face centroids of a prism cell.
The first element is $H^1$-nonconforming and works for second order problems,
which achieves a second order convergence rate in discrete $H^1$-norm.
The other is $H^2$-nonconforming and solves fourth order problems,
with a first order convergence rate in discrete $H^2$-norm.
Numerical examples verify our theoretical results.
\\[6pt]
\textbf{Keywords:} triangular prism elements; nonconforming; 3D elliptic problems; convergence analysis.
\end{abstract}

\section{Introduction}

Finite element methods are a powerful tool for partial differential equations.
An attractive issue is to design finite elements for their corresponding model problems,
but for the 3D case, it is generally not an easy task.
Perhaps the simplest model problem is the second order elliptic equation.
The family of $H^1$-conforming $P_k$ elements on tetrahedrons has become a typical example in many standard tutorials,
e.g.~\cite{Boffi2013,Brenner2002,Ciarlet1978,Chen2005}.
Over bricks,
the serendipity family is among the most popular finite elements,
for which a systematic study was given by Arnold and Awanou \cite{Arnold2011}.
Recently, $H^1$-nonconforming finite elements draw rapidly increasing attention
because of their simple structures and low brand widths in stiffness matrices.
In 1973, Crouzeix and Raviart \cite{Crouzeix1973} designed the centroid-continuous linear element over a simplex.
On cubical meshes, the 3D Wilson element \cite{Wilson1973},
the nonconforming rotated $Q_1$ elements \cite{Rannacher1992,Shi2000} and the nonconforming $P_1$ element \cite{Park2003}
are successful constructions.
Owing to the inter-element weak continuity or the interior symmetry of their finite element spaces,
these elements pass the generalized patch test \cite{Irons1972,Stummel1979,Shi1984,Shi1987}
and achieve the lowest order convergence rate.
Things become more complicated for higher order 3D nonconforming elements.
For simplicial meshes,
Fortin \cite{Fortin1985} extended the nonconforming quadratic triangular element \cite{Fortin1983} to the 3D case.
Several 14-node brick elements were proposed by Smith and Kidger \cite{Smith1992},
which are continuous at the vertices and the face centroids in a given cubical mesh,
but only parts of them are second order convergent in discrete $H^1$-norm \cite{Meng2016}.
Meng et al.~\cite{Meng2014} introduced another 14-node brick element using the same degrees of freedom (DoFs),
while the consistency error analysis is completely different.
Higher order 3D nonconforming elements with integral-type DoF selections can also be found in \cite{Matthies2005,Matthies2007,Meng2017}.

The construction of suitable finite elements for 3D fourth order elliptic problems is also an appealing subject.
Compared with $H^1$-conforming elements for second order problems,
it seems much more difficult to design an $H^2$-conforming element.
To meet the $C^1$-smoothness requirement,
many DoFs have to be utilized, and the degrees of the polynomials in shape function spaces must be very high.
An alternative way is to adopt the spline-based technique,
where a single element should be subdivided into several sub-elements.
Indeed, the polynomial constructions in \cite{Zenicek1973,Chen2009} and the spline-based construction \cite{Alfeld1984}
are the 3D extensions of their 2D counterparts: the Argyris element \cite{Argyris1968},
the Bogner-Fox-Schmit element \cite{Bogner1965} and the Hsieh-Clough-Tocher element \cite{Clough1965}, respectively.
To overcome the inconvenience of the $C^1$-continuity,
many low order $H^2$-nonconforming elements have been introduced.
On tetrahedrons, Wang et al.~\cite{Wang2006} extended the Morley element \cite{Morley1968} to the 3D case,
transferring the vertex DoFs in 2D into the edge DoFs in 3D.
They also provided several modifications of the 2D Zienkiewicz element \cite{Lascaux1975}
in a standard or non-standard manner \cite{Wang2007b,Wang2007c}.
On cubical meshes,
the 3D Morley element, the 3D Adini element, and the nonconforming 3D Bogner-Fox-Schmit element
were presented in \cite{Wang2007},
with nodal type DoFs at vertices employed.
For higher order nonconforming elements,
one can consult the contributions due to Wang et al.~\cite{Wang2012} and Chen et al.~\cite{Chen2013}
using a bubble function technique,
and the contributions by Hu et al.~\cite{Hu2016,Hu2016b} based on a superconvergence-like analysis.

All the aforementioned elements are constructed over tetrahedral or cubical meshes,
but much fewer works over triangular prisms have been reported in literature.
Triangular prisms are also commonly used in the partition of a 3D solution region.
In particular, they are more convenient and flexible than tetrahedral and brick cells
if the solution region is columnar with a complex base.
Moreover, they are often used for connecting other shapes containing triangular or rectangular faces.
It is a natural way of designing conforming triangular prism elements by utilizing the property that
a triangular prism is indeed the product of a triangle and an interval.
The $H^1$-conforming family over triangular prisms is a typical example \cite{Chen2005}.
Other examples include the mixed elements designed in \cite{Chen2005,Hu2018} for various problems.
As far as nonconforming elements are concerned, the above technique seems invalid.
We only find a direct extension of the nonconforming rotated $Q_1$ element \cite{Rannacher1992} to prisms by Arbogast and Chen \cite{Chen2005,Arbogast1995}, with the lowest convergence order achieved for the second order problem.
For fourth order problems, Chen et al.~\cite{Chen2013b,Chen2014} proposed several elements following the idea of their previous work \cite{Chen2013}.
It seems attractive and not trivial to design new triangular prism elements with lower computational costs,
while the convergence orders are as high as possible.

In this work, we construct two 11-node nonconforming triangular prism elements.
For each element, the shape function space is spanned by all the polynomials in $P_2$ plus a cubic one.
Both elements adopt vertex values as their DoFs,
but the remaining DoFs are different.
In the first element they are values at all face centroids,
and in the second element they are replaced by normal derivatives at all face centroids.
Explicit basis representations are provided for both theoretical and practical purposes.
Thanks to the special property of the element shape,
we can obtain optimal convergence results applied to their model problems.
The first element is $H^1$-nonconforming and works for second order elliptic problems,
which achieves a second order convergence rate in discrete $H^1$-norm.
The other is $H^2$-nonconforming and solves fourth order problems,
with a first order convergence rate in discrete $H^2$-norm.
Numerical examples are provided to confirm our theoretical findings.
To the best of our knowledge,
these two elements are of the lowest brand widths in stiffness matrices or of the least DoFs
among all known triangular prism elements in literature which preserve the same convergence orders for their corresponding model problems.

The rest of this work is arranged as follows.
We introduce the first element in Section \ref{s: element 1} with its unisolvency and basic properties,
and then provide the convergence analysis for second order elliptic problems.
Next, the counterpart for the second element is given in Section \ref{s: element 2} applied to fourth order problems.
We end this work with numerical tests in Section \ref{s: numer}.

Standard notations in Sobolev spaces are used throughout this work.
For a domain $D\subset\mathbb{R}^3$,
$\bm{n}$ will be the unit outward normal vector on $\partial D$.
The notation $P_k(D)$ denotes the usual polynomial space over $D$ of degree no more than $k$.
The norms and semi-norms of order $m$ in the Sobolev spaces $H^m(D)$
are indicated by $\|\cdot\|_{m,D}$ and $|\cdot|_{m,D}$, respectively.
The space $H_0^m(D)$ is the closure in $H^m(D)$ of $C_0^{\infty}(D)$.
We also adopt the convention that $L^2(D):=H^0(D)$,
where the inner-product is denoted by $(\cdot,\cdot)_D$.
These notations of norms, semi-norms and inner-products also work for vector- and matrix-valued Sobolev spaces,
where the subscript $\Omega$ will be omitted if the domain $D=\Omega$.
Moreover, $C$ is a positive constant independent of the mesh size $h$ in our model problems,
and may be different in different places.

\section{A nonconforming triangular prism element for 3D second order elliptic problems}
\label{s: element 1}

\subsection{Construction of the element and basic properties}
\label{ss: element 1}
Let $K$ be a triangular prism whose vertices are $V_1,V_2,\ldots,V_6$.
The side faces of $K$ are parallel to the $x_3$-axis,
which read $F_1=V_2V_3V_6V_5$, $F_2=V_3V_1V_4V_6$, $F_3=V_1V_2V_5V_4$.
Note that for each $i=1,2,3$, the vertices $V_i$ and $V_{i+3}$ are not in $F_i$.
The faces $F_4=V_1V_2V_3$ and $F_5=V_4V_5V_6$ are parallel to the $x_1Ox_2$-plane,
yielding that the $x_3$-coordinate of $F_5$ is greater than that of $F_4$.
For each $i=1,2,\ldots,5$, $M_i$ denotes the centroid of $F_i$.
See Figure \ref{fig: prism} as an example.

\begin{figure}[!htb]
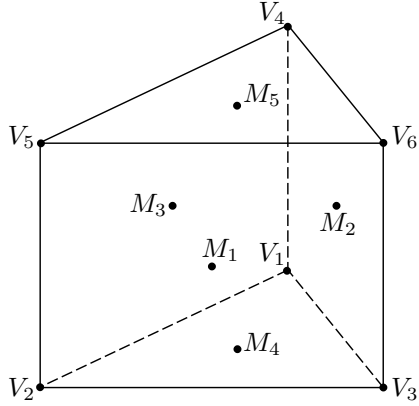

\centering
\begin{overpic}[scale=0.27]{prism.eps}
\put(-4,0){$V_2$} \put(-4,67){$V_5$} \put(97,0){$V_3$} \put(97,67){$V_6$}
\put(62,36){$V_1$} \put(62,100){$V_4$} \put(47,37){$M_1$} \put(78,44){$M_2$}
\put(28,49){$M_3$} \put(58,12){$M_4$} \put(58,78){$M_5$}
\end{overpic}
\caption{Notations over a triangular prism $K$.\label{fig: prism}}
\end{figure}

For each $i=1,2,\ldots,5$,
the equation of the plane through $F_i$ is written as $\lambda_i(x_1,x_2,x_3)=0$,
which will be uniquely determined by $\lambda_i(V_i)=1$, $i=1,2,3$,
and $\lambda_4(M_1)=-\lambda_5(M_1)=1$.
We also set $\lambda_0(x_1,x_2,x_3)=0$ as the equation of the plane through $M_1,M_2$ and $M_3$
such that $\lambda_0(V_4)=-\lambda_0(V_1)=1$.
Indeed, $(\lambda_1,\lambda_2,\lambda_3)$ is the barycentric coordinate of the point $(x_1,x_2)$ in the face $F_4$
(or equivalently $F_5$),
and hence they are independent of $x_3$.
Similarly, $\lambda_0$, $\lambda_4$ and $\lambda_5$ are linear functions only in variable $x_3$.
The first 11-node triangular prism element is defined below.

\begin{definition}
\label{d: element 1}
The element $(K,P_K,\Sigma_K)$ is defined as follows:
\begin{itemize}
\setlength{\itemsep}{-\itemsep}
\item $K$ is the given triangular prism;
\item $P_K=P_2(K)\oplus\mathrm{span}\{\phi\}$ is the shape function space where
\[
\phi(x_1,x_2,x_3)=\frac{5}{12}\lambda_0\lambda_4\lambda_5+\lambda_0(\lambda_1\lambda_2
+\lambda_2\lambda_3+\lambda_3\lambda_1);
\]
\item $\Sigma_K=\{\sigma_i,~i=1,2,\ldots,11\}$ is the DoF set where
\[
\begin{aligned}
\sigma_i(v)&=v(V_i),~i=1,2,\ldots,6,\\
\sigma_{i+6}(v)&=v(M_i),~i=1,2,\ldots,5,
~\forall v\in P_K.
\end{aligned}
\]
\end{itemize}
\end{definition}

\begin{theorem}
\label{th: unisol 1}
The element $(K,P_K,\Sigma_K)$ is well-defined.
\end{theorem}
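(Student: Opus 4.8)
The statement is unisolvence. Since $P_K=P_2(K)\oplus\mathrm{span}\{\phi\}$ has dimension $10+1=11$, exactly $\#\Sigma_K$, it suffices to prove that the evaluation map $v\mapsto(\sigma_1(v),\dots,\sigma_{11}(v))$ is injective; that is, any $v=p+c\phi$ with $p\in P_2(K)$ and $c\in\mathbb{R}$ that vanishes at the six vertices and the five face centroids must be identically zero. The plan is to exploit the product structure of the prism by separating the horizontal variables (carried by $\lambda_1,\lambda_2,\lambda_3$, which are independent of $x_3$) from the vertical one (carried by $\lambda_0$, affine in $x_3$ and constant on each horizontal slice).

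First I would rewrite $\phi$ in these coordinates. From the normalizations $\lambda_4(M_1)=-\lambda_5(M_1)=1$ and $\lambda_0(V_4)=-\lambda_0(V_1)=1$, matching the affine-in-$x_3$ functions at the mid-height slice $\lambda_0=0$ and at the two triangular faces gives $\lambda_4=\lambda_0+1$ and $\lambda_5=\lambda_0-1$, so that
\[
\phi=\tfrac{5}{12}\lambda_0(\lambda_0^2-1)+\lambda_0 s,\qquad s:=\lambda_1\lambda_2+\lambda_2\lambda_3+\lambda_3\lambda_1 .
\]
Collecting $v=p+c\phi$ by powers of $\lambda_0$, and using that the terms of $\lambda_0$-degree $0,1,2$ in a $p\in P_2(K)$ are respectively a quadratic, a linear, and a constant function of the triangle variables, I can write
\[
v=A+B\lambda_0+d\,\lambda_0^2+\tfrac{5c}{12}\lambda_0^3,\qquad B=B_1+c\bigl(s-\tfrac{5}{12}\bigr),
\]
where $A$ is a quadratic on the triangle, $B_1$ is linear on the triangle, and $d$ is constant.

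Next I would impose the nodal conditions in three waves. The six vertices come in three top/bottom pairs at $\lambda_0=\pm1$ sharing the same horizontal position $P_k$ (a triangle vertex, where $s=0$). Taking sums and differences within each pair decouples the system: the differences force $B_1(P_k)=0$ for $k=1,2,3$, hence $B_1\equiv0$ (a linear function vanishing at the three triangle vertices), while the sums give $A(P_k)=-d$. The side centroids $M_1,M_2,M_3$ lie on the slice $\lambda_0=0$, where $v=A$, so $A$ vanishes at the three edge midpoints. Thus $A$ is the $P_2$-Lagrange interpolant on the triangle with vertex values $-d$ and edge-midpoint values $0$, namely $A=-d(1-4s)$.

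Finally, the two remaining conditions at the horizontal-face centroids $M_4,M_5$ (both at $s=\tfrac13$, with $\lambda_0=\mp1$) become, after substitution, a $2\times2$ system in $(c,d)$ of the form $4d\pm c=0$, whose only solution is $c=d=0$; this returns $A\equiv0$, $B\equiv0$, and hence $v=0$. The main obstacle is really just the bookkeeping of the second and third waves—pinning down the $\lambda_0$-degree structure of $p$ and verifying the triangle interpolation identity $A=-d(1-4s)$—after which the conclusion is routine linear algebra. One checks in passing that the final $2\times2$ determinant is nonzero; interestingly it does not involve the constant $\tfrac{5}{12}$, so that coefficient is not what drives unisolvence.
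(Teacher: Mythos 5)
Your argument is correct, and it takes a genuinely different route from the paper. The paper proves unisolvence constructively: it writes down explicit nodal basis functions $\phi_1,\dots,\phi_{11}$ (built from the $\lambda_i$) and verifies the Kronecker-delta relations directly, which has the practical payoff of providing the basis representation the authors need later for implementation. You instead count dimensions ($\dim P_K=11=\#\Sigma_K$) and show the evaluation map has trivial kernel, exploiting the tensor-product structure of the prism: the expansion $v=A+B\lambda_0+d\lambda_0^2+\tfrac{5c}{12}\lambda_0^3$ with $A\in P_2$, $B_1\in P_1$ on the triangle and $d$ constant is exactly right (and your identifications $\lambda_4=\lambda_0+1$, $\lambda_5=\lambda_0-1$ follow correctly from the stated normalizations), the vertex sums/differences do decouple as you claim (the difference at a vertex pair gives $2B(P_k)+\tfrac{5c}{6}=0$, hence $B_1(P_k)=0$ after the $-\tfrac{5c}{12}$ in $B(P_k)$ cancels), the side centroids pin down $A=-d(1-4s)$ via standard $P_2$ Lagrange interpolation on the triangle, and the two remaining conditions at $M_4,M_5$ reduce to $4d-c=0$, $4d+c=0$. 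Your approach is cleaner as a pure unisolvence proof and yields a structural insight the paper's verification obscures, namely that the coefficient $\tfrac{5}{12}$ cancels out of the final system and so plays no role in unisolvence (it is instead needed for the face-integral identities in Lemma 2.5); what it does not deliver is the explicit nodal basis, which the paper's proof produces as a byproduct.
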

\begin{proof}
It suffices to give a nodal basis representation of $P_K$ with respect to $\Sigma_K$.
To this end, we first define
\[
\begin{aligned}
\phi_{i+6}&=\frac{8}{3}\lambda_j\lambda_k-\frac{4}{3}\lambda_i(1-\lambda_i)-\frac{1}{3}\lambda_4\lambda_5,
~i,j,k=1,2,3,~i\neq j\neq k\neq i,\\
\phi_{10}&=\frac{3}{2}(\lambda_1\lambda_2+\lambda_2\lambda_3+\lambda_3\lambda_1)+\frac{3}{8}\lambda_4\lambda_5-\frac{3}{2}\phi,\\
\phi_{11}&=\frac{3}{2}(\lambda_1\lambda_2+\lambda_2\lambda_3+\lambda_3\lambda_1)+\frac{3}{8}\lambda_4\lambda_5+\frac{3}{2}\phi.
\end{aligned}
\]
A careful calculation using the properties of all $\lambda_i$ leads to
\begin{equation}
\label{e: nodal 1 1}
\phi_{i+6}(V_j)=0,~\phi_{i+6}(M_k)=\delta_{i,k},~i,k=1,2,\ldots,5,~j=1,2,\ldots,6.
\end{equation}
Next, noting that for $i=1,2,3$ and $j=4,5$,
$\lambda_i\lambda_j$ vanishes at $V_k$, $M_i$ and $M_j$, $k\neq i$, $k=1,2,\ldots,6$,
we write
\[
\begin{aligned}
&\phi_i=-\frac{1}{2}\lambda_i\lambda_5-\frac{1}{4}(\phi_{j+6}+\phi_{k+6})-\frac{1}{3}\phi_{10},\\
&\phi_{i+3}=\frac{1}{2}\lambda_i\lambda_4-\frac{1}{4}(\phi_{j+6}+\phi_{k+6})-\frac{1}{3}\phi_{11},\\
&i,j,k=1,2,3,~i\neq j\neq k\neq i,
\end{aligned}
\]
then
\begin{equation}
\label{e: nodal 1 2}
\phi_i(V_j)=\delta_{i,j},~\phi_i(M_k)=0,~i,j=1,2,\ldots,6,~k=1,2\ldots,5.
\end{equation}
Hence, from (\ref{e: nodal 1 1}) and (\ref{e: nodal 1 2}) we find that $\phi_i\in P_K$ is the nodal basis function with respect to $\sigma_i$, $i=1,2,\ldots,11$,
which completes the proof.
\end{proof}

In what follows, we shall provide some crucial properties of this element.
Consider the following functions:
\[
\begin{aligned}
f_i(x_1,x_2,x_3)&=\lambda_i-(\lambda_1\lambda_2+\lambda_2\lambda_3+\lambda_3\lambda_1),~i=1,2,3,\\
f_4(x_1,x_2,x_3)&=3(\lambda_1\lambda_2+\lambda_2\lambda_3+\lambda_3\lambda_1).
\end{aligned}
\]
Since they are independent of $x_3$, one has
\begin{equation}
\label{e: face preserve 1}
P_1(F_j)=\mathrm{span}\{\lambda_i,~i=1,2,3\}\subset\mathrm{span}\{f_i,~i=1,2,3,4\},~j=4,5.
\end{equation}
Moreover, they yield
\[
\begin{aligned}
f_i(V_j)&=f_i(V_{j+3})=\delta_{i,j},~f_i(M_4)=f_i(M_5)=0,~i,j=1,2,3,\\
f_4(V_j)&=f_4(V_{j+3})=0,~j=1,2,3,~f_4(M_4)=f_4(M_5)=1.
\end{aligned}
\]
Therefore, for $F=F_4,F_5$ and $s>0$,
we can define the nodal interpolation operators $\mathcal{I}_F$ from
$H^{3/2+s}(K)$ to $\mathrm{span}\{f_i,~i=1,2,3,4\}$ by setting
\[
\begin{aligned}
(\mathcal{I}_{F_4}v)(V_i)&=v(V_i),~(\mathcal{I}_{F_5}v)(V_{i+3})=v(V_{i+3}),\\
(\mathcal{I}_{F_j}v)(M_j)&=v(M_j),~\forall v\in H^{3/2+s}(K),~i=1,2,3,~j=4,5.
\end{aligned}
\]

\begin{lemma}
\label{lemma: face equal 1}
For all $v\in P_K$, it holds that
\begin{equation}
\label{e: face equal 1}
v|_{F_4}-\mathcal{I}_{F_4}v=v|_{F_5}-\mathcal{I}_{F_5}v.
\end{equation}
\end{lemma}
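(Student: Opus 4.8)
The plan is to reduce \eqref{e: face equal 1} to a statement about bivariate polynomials on the base triangle $T$ and then to verify it on a basis of $P_K$. The first observation is that each side of \eqref{e: face equal 1} depends on $v$ only through its restriction to the relevant triangular face, because the DoFs defining $\mathcal{I}_{F_4}$ and $\mathcal{I}_{F_5}$ are point evaluations at the vertices and centroid of $F_4$ and $F_5$. Identifying both $F_4$ and $F_5$ with $T$ via vertical projection onto the $x_1Ox_2$-plane, I would show that $\mathcal{I}_{F_4}$ and $\mathcal{I}_{F_5}$ are one and the same operator $\mathcal{I}$ acting on $P_2(T)$, namely interpolation onto $\mathrm{span}\{f_i,~i=1,2,3,4\}$ at the three vertices and centroid of $T$. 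This identification is legitimate precisely because the side faces are parallel to the $x_3$-axis, so $V_{i+3}$ lies above $V_i$ and $M_5$ above $M_4$, while $\lambda_1,\lambda_2,\lambda_3$ (hence the $f_i$) are independent of $x_3$. Noting that $v|_{F_4},v|_{F_5}\in P_2(T)$ for every $v\in P_K$, one then has $\mathcal{I}_{F_4}v=\mathcal{I}(v|_{F_4})$ and $\mathcal{I}_{F_5}v=\mathcal{I}(v|_{F_5})$.

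With this in hand, \eqref{e: face equal 1} is equivalent to $(I-\mathcal{I})(v|_{F_4}-v|_{F_5})=0$. Since $\mathcal{I}$ fixes each $f_i$ (the nodal relations stated just before the lemma say the $f_i$ are the nodal basis at $V_1,V_2,V_3,M_4$), it is a projection onto $\mathrm{span}\{f_i,~i=1,2,3,4\}$, whose fixed-point set $\ker(I-\mathcal{I})$ is exactly that span. Hence the lemma is equivalent to the clean assertion
\[
v|_{F_4}-v|_{F_5}\in\mathrm{span}\{f_i,~i=1,2,3,4\}\qquad\text{for every }v\in P_K.
\]

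I would establish this by linearity on a basis of $P_K$ organised by dependence on $x_3$. Every $x_3$-independent member of $P_K$ satisfies $v|_{F_4}=v|_{F_5}$ and contributes the zero difference, and a basis of the $x_3$-independent quadratics covers these directions. For the remaining ones I would take $\{\lambda_4,\lambda_4^2,\lambda_1\lambda_4,\lambda_2\lambda_4\}$ together with $\phi$. From the defining conditions one reads off the face values $(\lambda_0,\lambda_4,\lambda_5)=(-1,0,-2)$ on $F_4$ and $(\lambda_0,\lambda_4,\lambda_5)=(1,2,0)$ on $F_5$. Thus each of $\lambda_4,\lambda_4^2,\lambda_1\lambda_4,\lambda_2\lambda_4$ vanishes on $F_4$ and restricts on $F_5$ to $2,\,4,\,2\lambda_1,\,2\lambda_2$, so their differences $-2,\,-4,\,-2\lambda_1,\,-2\lambda_2$ all lie in $\mathrm{span}\{\lambda_i\}\subset\mathrm{span}\{f_i\}$ by \eqref{e: face preserve 1}. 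For $\phi$ the cubic term $\tfrac{5}{12}\lambda_0\lambda_4\lambda_5$ vanishes on both faces (since $\lambda_4=0$ on $F_4$ and $\lambda_5=0$ on $F_5$), leaving $\phi|_{F_4}=-(\lambda_1\lambda_2+\lambda_2\lambda_3+\lambda_3\lambda_1)$ and $\phi|_{F_5}=+(\lambda_1\lambda_2+\lambda_2\lambda_3+\lambda_3\lambda_1)$; the difference is $-2(\lambda_1\lambda_2+\lambda_2\lambda_3+\lambda_3\lambda_1)=-\tfrac{2}{3}f_4\in\mathrm{span}\{f_i\}$. Since these functions, together with the $x_3$-independent quadratics, form a basis of $P_K$, the assertion follows.

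The routine evaluations of the face values are not the difficulty. The care lies in the reduction step: one must justify that $\mathcal{I}_{F_4}$ and $\mathcal{I}_{F_5}$ really collapse to a single operator on $P_2(T)$, which rests on the $x_3$-independence of $\lambda_1,\lambda_2,\lambda_3$ and on the correspondences $V_i\leftrightarrow V_{i+3}$ and $M_4\leftrightarrow M_5$ under vertical projection. Once that is secured, the fact that $\tfrac{5}{12}\lambda_0\lambda_4\lambda_5$ drops out on each triangular face is exactly what forces the $\phi$-contribution into $\mathrm{span}\{f_i\}$, and this is the crux of the lemma.
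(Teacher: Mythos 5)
Your proof is correct and follows essentially the same route as the paper's: both verify the identity on a basis of $P_K$, using that the $x_3$-independent quadratics have identical restrictions to the two triangular faces while the remaining basis functions (including $\phi$, whose cubic term $\tfrac{5}{12}\lambda_0\lambda_4\lambda_5$ drops on each face) restrict into $\mathrm{span}\{f_1,\dots,f_4\}$, which $\mathcal{I}_{F_j}$ reproduces exactly. Your reformulation of the claim as $v|_{F_4}-v|_{F_5}\in\mathrm{span}\{f_i,\ i=1,2,3,4\}$ and your explicit justification that $\mathcal{I}_{F_4}$ and $\mathcal{I}_{F_5}$ collapse to one operator under vertical projection are just tidier packagings of what the paper's proof leaves implicit.
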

\begin{proof}
First notice that the following functions $v_i=\lambda_i$, $v_{i+3}=\lambda_j\lambda_k$, $i,j,k=1,2,3$, $i\neq j\neq k\neq i$
are independent of $x_3$.
Therefore $v_i|_{F_4}=v_i|_{F_5}$, and so (\ref{e: face equal 1}) is true for $v_i$, $i=1,2,\ldots,6$.
Next, since $\lambda_0$ only depends on $x_3$,
for $v_{i+6}=\lambda_i\lambda_0$, $i=1,2,3$, $v_{10}=\lambda_0^2$, $v_{11}=\phi$,
we see $v_i|_{F_j}\in\mathrm{span}\{f_i,~i=1,2,3,4\}$, $i=7,8,\ldots,11$, $j=4,5$.
Hence $v_i|_{F_j}=\mathcal{I}_{F_j}v_i$, and thus (\ref{e: face equal 1}) also holds for $v_i$, $i=7,8,\ldots,11$.
Finally, the desired conclusion follows from the fact that $P_K=\mathrm{span}\{v_i,~i=1,2,\ldots,11\}$.
\end{proof}

The following lemmas reveal some useful relations on the faces of $K$.

\begin{lemma}
\label{lemma: face relation top 1}
For all $v\in P_K$, we have
\begin{equation}
\label{e: face relation top 1}
\begin{aligned}
\frac{1}{|F_4|}\int_{F_4}v\,\mathrm{d}\sigma&=\frac{1}{12}\sum_{i=1}^3v(V_i)+\frac{3}{4}v(M_4),\\
\frac{1}{|F_5|}\int_{F_5}v\,\mathrm{d}\sigma&=\frac{1}{12}\sum_{i=4}^6v(V_i)+\frac{3}{4}v(M_5).
\end{aligned}
\end{equation}
\end{lemma}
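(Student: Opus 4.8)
The plan is to reduce the claimed identity to a classical vertices-plus-centroid quadrature on a triangle, after first checking that every $v\in P_K$ restricts to a quadratic on the triangular faces $F_4$ and $F_5$.

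First I would analyze the restriction of the shape functions to $F_4$. Any element of $P_2(K)$ clearly restricts to a polynomial of degree at most two on the planar face $F_4$, so only the cubic enrichment $\phi$ needs attention. On $F_4$ one has $\lambda_4\equiv 0$, so the term $\tfrac{5}{12}\lambda_0\lambda_4\lambda_5$ vanishes; moreover $\lambda_0$ depends on $x_3$ alone and $F_4$ lies in a plane of constant $x_3$, whence $\lambda_0\equiv\lambda_0(V_1)=-1$ there. Therefore $\phi|_{F_4}=-(\lambda_1\lambda_2+\lambda_2\lambda_3+\lambda_3\lambda_1)$, which is quadratic, and consequently $v|_{F_4}\in P_2(F_4)$ for every $v\in P_K$. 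The same reasoning, now using $\lambda_5\equiv 0$ and $\lambda_0\equiv\lambda_0(V_4)=1$ on $F_5$, gives $v|_{F_5}\in P_2(F_5)$.

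Next I would invoke the standard exact quadrature on $P_2$ of a triangle. Writing the integrals in the barycentric coordinates $(\lambda_1,\lambda_2,\lambda_3)$ of $F_4$, I would recall the elementary moments $\tfrac{1}{|F_4|}\int_{F_4}\lambda_i\,\mathrm{d}\sigma=\tfrac13$, $\tfrac{1}{|F_4|}\int_{F_4}\lambda_i^2\,\mathrm{d}\sigma=\tfrac16$, and $\tfrac{1}{|F_4|}\int_{F_4}\lambda_i\lambda_j\,\mathrm{d}\sigma=\tfrac{1}{12}$ for $i\neq j$, and then verify on the basis $\{1,\lambda_i,\lambda_i^2,\lambda_i\lambda_j\}$ of $P_2(F_4)$ that the functional $w\mapsto\tfrac{1}{12}\sum_{i=1}^3 w(V_i)+\tfrac34\,w(M_4)$ reproduces each of these moments. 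This establishes that the three-point rule is exact on all of $P_2(F_4)$; applying it to $w=v|_{F_4}$ produces the first identity in (\ref{e: face relation top 1}), and the $F_5$ case follows verbatim with vertices $V_4,V_5,V_6$ and centroid $M_5$.

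The step I would treat most carefully --- and the only one that is not pure bookkeeping --- is the reduction of $\phi$ to a quadratic on the two triangular faces, since it is precisely the vanishing of $\lambda_4$ (resp.\ $\lambda_5$) together with the constancy of $\lambda_0$ on $F_4$ (resp.\ $F_5$) that keeps $v$ inside $P_2$ on the face, where the vertices-plus-centroid rule is exact. Everything after that is a routine moment calculation.
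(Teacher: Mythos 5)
Your proposal is correct and follows essentially the same route as the paper: both reduce the identity to the exactness of the vertices-plus-centroid rule on $P_2(F_j)$ after observing that $\phi|_{F_j}$ is quadratic (the paper merely asserts this; you justify it via $\lambda_4|_{F_4}=0$ and the constancy of $\lambda_0$ on $F_4$, which is right). The only cosmetic difference is that the paper verifies the rule by comparison with the $3$-midpoint quadrature while you check it against the barycentric moments $\tfrac13,\tfrac16,\tfrac1{12}$; both are routine and equivalent.
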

\begin{proof}
Owing to that $\phi|_{F_j}\in P_2(F_j)$, $j=4,5$,
it suffices to verify these equations for $v\in P_2(F_j)$,
which can be directly carried out by the 3-midpoint quadrature rule and the values of $\lambda_i$,
$i=1,2,3$ at associated points.
\end{proof}

\begin{lemma}
\label{lemma: face relation 1}
On the side faces of $K$, it holds that
\begin{equation}
\label{e: face relation 1}
\begin{aligned}
\frac{1}{|F_i|}\int_{F_i}v\,\mathrm{d}\sigma&=\frac{1}{12}\left(v(V_j)+v(V_k)+v(V_{j+3})+v(V_{k+3})\right)+\frac{2}{3}v(M_i),\\
\frac{1}{|F_i|}\int_{F_i}v\xi_i\,\mathrm{d}\sigma&=\frac{1}{12}\left(v(V_k)+v(V_{k+3})-v(V_j)-v(V_{j+3})\right),\\
\frac{1}{|F_i|}\int_{F_i}v\eta_i\,\mathrm{d}\sigma&=\frac{1}{12}\left(v(V_{j+3})+v(V_{k+3})-v(V_j)-v(V_k)\right),\\
&\forall v\in P_K,~i,j,k=1,2,3,~i\neq j\neq k\neq i,
\end{aligned}
\end{equation}
where $\xi_i$ and $\eta_i$ are the linear functions over $F_i$ such that
\[
\begin{aligned}
\xi_i(V_k)&=\xi_i(V_{k+3})=\eta_i(V_{j+3})=\eta_i(V_{k+3})=1,\\
\xi_i(V_j)&=\xi_i(V_{j+3})=\eta_i(V_j)=\eta_i(V_k)=-1.
\end{aligned}
\]
\end{lemma}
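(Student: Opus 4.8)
The plan is to reduce all three identities to elementary computations on a single reference square, exploiting that every quantity appearing in the lemma depends only on the restriction $v|_{F_i}$: the five nodal values and the three face integrals see nothing else. First I would parametrize the rectangular face $F_i$ by the lemma's own affine coordinate functions, noting that on $F_i$ one has $\xi_i=\lambda_k-\lambda_j$ and $\eta_i=\lambda_0$ (both consistent with the prescribed vertex values). These send $V_j\mapsto(-1,-1)$, $V_k\mapsto(1,-1)$, $V_{j+3}\mapsto(-1,1)$, $V_{k+3}\mapsto(1,1)$ and $M_i\mapsto(0,0)$, and since the parametrization is affine the normalized face integral becomes the average over the square: $\tfrac{1}{|F_i|}\int_{F_i}g\,\mathrm{d}\sigma=\tfrac14\int_{-1}^{1}\int_{-1}^{1}g\,\mathrm{d}\xi_i\,\mathrm{d}\eta_i$.

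Next I would identify $P_K|_{F_i}$ explicitly. Because $\lambda_i\equiv 0$ on $F_i$, the in-plane relations $\lambda_j=\tfrac{1-\xi_i}{2}$, $\lambda_k=\tfrac{1+\xi_i}{2}$, together with $\lambda_4=\eta_i+1$ and $\lambda_5=\eta_i-1$, give $\lambda_j\lambda_k=\tfrac14(1-\xi_i^2)$ and $\lambda_0\lambda_4\lambda_5=\eta_i^3-\eta_i$, whence $\phi|_{F_i}=\psi:=\tfrac{5}{12}(\eta_i^3-\eta_i)+\tfrac14\eta_i(1-\xi_i^2)$. Since all six quadratic monomials in $(\xi_i,\eta_i)$ arise as restrictions of elements of $P_2(K)$, the space $P_K|_{F_i}$ is exactly the span of $1,\xi_i,\eta_i,\xi_i^2,\xi_i\eta_i,\eta_i^2$ and $\psi$. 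As both sides of each identity are linear in $v$, it then suffices to verify each identity on these seven generators.

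The verification I would organize by the two reflections $\xi_i\mapsto-\xi_i$ and $\eta_i\mapsto-\eta_i$ of the square, which are also symmetries of the three right-hand-side functionals (the equal-weight corner/center sum is invariant under both; the $\xi_i$-difference is $\xi_i$-odd and $\eta_i$-even; the $\eta_i$-difference is $\eta_i$-odd and $\xi_i$-even). For the first identity only the even--even generators $1,\xi_i^2,\eta_i^2$ survive on both sides; for the second only $\xi_i$; for the third only $\eta_i$ and $\psi$. The surviving monomial cases reduce to the elementary square averages $\tfrac14\iint 1=1$ and $\tfrac14\iint\xi_i^2=\tfrac14\iint\eta_i^2=\tfrac13$, matched against the prescribed weights $\tfrac{1}{12}$ and $\tfrac23$; the corner and center values are read off directly. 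I would also record once and for all that $\psi$ vanishes at the four corners and the center, so it contributes $0$ to every right-hand side.

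The single step that is not automatic, and which I expect to be the crux, is the $\psi$-row of the third identity: there $\psi\,\eta_i$ is even in both variables, so the symmetry argument does not eliminate it, and I must confirm by direct integration that $\tfrac{1}{|F_i|}\int_{F_i}\psi\,\eta_i\,\mathrm{d}\sigma=0$. This amounts to the cancellation $\tfrac{5}{12}\bigl(\tfrac15-\tfrac13\bigr)+\tfrac14\bigl(\tfrac13-\tfrac19\bigr)=0$, which holds precisely because of the coefficient $5/12$ built into $\phi$; everything else in the lemma is either a symmetry observation or a one-line evaluation.
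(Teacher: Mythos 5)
Your proof is correct and follows essentially the same route as the paper's: both verify the identities on a spanning set of $P_K|_{F_i}$, use the reflection symmetries of the rectangular face to dispose of most cases, and reduce the only nontrivial case (the third identity applied to $\phi$) to a direct integration whose vanishing hinges on the coefficient $5/12$ built into $\phi$. The difference is presentational: the paper invokes the exactness of the $5$-point diagonal Simpson rule for bivariate cubics on rectangles (citing Miller) to settle the quadratic part and the first identity for $\phi$ in one stroke, whereas you re-derive exactly what is needed via monomial averages on the reference square $[-1,1]^2$, which makes the argument self-contained and makes explicit the cancellation $\tfrac{5}{12}\bigl(\tfrac15-\tfrac13\bigr)+\tfrac14\bigl(\tfrac13-\tfrac19\bigr)=0$ that the paper only asserts as ``both sides are again zero.''
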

\begin{proof}
If $v\in P_2(K)$, then $v|_{F_i}\in P_2(F_i)$ and $v|_{F_i}\xi_i,v|_{F_i}\eta_i\in P_3(F_i)$, $i=1,2,3$.
Recall from \cite{Miller1960} that the 5-point diagonal Simpson quadrature rule on rectangles
(which is precisely the first equation in (\ref{e: face relation 1}))
is exact for bivariant polynomials of degree no more than three.
These equations are immediately obtained for $v\in P_2(K)$ by substituting the values of $v$,
$v\xi_i$ and $v\eta_i$ at the four vertices and the centroid of $F_i$.
Let us now turn to the case when $v=\phi$.
Since $\phi\in P_3(K)$, the quadrature rule above is still exact
and then the first equation in (\ref{e: face relation 1}) is valid.
Moreover, if $P$ and $P'$ in $F_i$ are symmetric points with respect to the line $\xi_i=0$,
then by the expression of $\phi$ we get $\phi(P)=\phi(P')$, and thus $(\phi\xi_i)(P)=-(\phi\xi_i)(P')$,
which ensures the validity of the second equation since both sides vanish.
Finally, the last equation can also be verified by using a higher order quadrature rule over $F_i$.
Indeed, both sides are again zero, which completes the proof.
\end{proof}

\subsection{Applied to second order elliptic problems}
\label{ss: poisson}
Let $\Omega\subset\mathbb{R}^3$ be a polyhedral domain which can be partitioned into triangular prism cells.
Consider the Poisson problem
\begin{equation}
\label{e: Poisson}
\begin{aligned}
-\Delta u&=f~~~&\mbox{in}~\Omega,\\
u&=0~&\mbox{on}~\partial\Omega,
\end{aligned}
\end{equation}
where $f\in H^{-1}(\Omega)$ is the given data.
The weak formulation of (\ref{e: Poisson}) is to find $u\in H_0^1(\Omega)$ such that
\begin{equation}
\label{e: weak 1}
(\nabla u,\nabla v)=(f,v),~~~\forall v\in H_0^1(\Omega).
\end{equation}

Let $\{\mathcal{T}_h\}$ be a family of quasi-uniform and shape-regular partitions
of $\Omega$ consisting of triangular prisms fulfilling the conditions in Subsection \ref{ss: element 1}.
The parameter $h$ is the maximum size of the cells in $\mathcal{T}_h$.
The sets of all interior vertices, boundary vertices,
faces, interior faces and boundary faces are correspondingly denoted by
$\mathcal{V}_h^i$, $\mathcal{V}_h^b$, $\mathcal{F}_h$, $\mathcal{F}_h^i$ and $\mathcal{F}_h^b$.
For each $F\in\mathcal{F}_h$, $\bm{n}_F$ is a fixed unit vector perpendicular to $F$,
and $M_F$ is the centroid of $F$.
Moreover, for $F\in\mathcal{F}_h^i$, the jump of a piecewise smooth function $v$
across $F$ is defined as $[v]_F=v|_{K_1}-v|_{K_2}$,
where $K_1$ and $K_2$ are the cells sharing $F$ as a common face.
For $F\in\mathcal{F}_h^b$, we set $[v]_F=v|_{K}$ if $F$ is a face of $K$.

The global finite element space $V_h$ is defined via
\[
\begin{aligned}
V_h=\Big\{v\in& L^2(\Omega):~v|_K\in P_K,~\forall K\in\mathcal{T}_h,
~\mbox{$v$ is continuous at all $V$ and $M_F$}\\
&\mbox{for $V\in\mathcal{V}_h^i$ and $F\in\mathcal{F}_h^i$, $v(V)=v(M_F)=0$ if $V\in \mathcal{V}_h^b$
and $F\in\mathcal{F}_h^b$}\Big\}.
\end{aligned}
\]
Then the finite element approximation of (\ref{e: weak 1}) is: Find $u_h\in V_h$ such that
\begin{equation}
\label{e: discrete weak 1}
\sum_{K\in\mathcal{T}_h}(\nabla u_h,\nabla v_h)_K=(f,v_h),~\forall v_h\in V_h.
\end{equation}
Introduce the semi-norm
\[
|v_h|_{m,h}=\left(\sum_{K\in\mathcal{T}_h}|v_h|_{m,K}^2\right)^{1/2},~m=1,2,
\]
then $|\cdot|_{1,h}$ is a norm on $V_h$,
and therefore problem (\ref{e: discrete weak 1}) has a unique solution due to the Lax-Milgram lemma.

For each $K\in\mathcal{T}_h$,
(\ref{e: face preserve 1}) implies that $\mathcal{I}_{F_j}v=v|_{F_j}$ for all $v\in P_1(K)$, $j=4,5$.
Then by the Bramble-Hilbert lemma, the trace theorem and a scaling argument, one has
\begin{equation}
\label{e: interp face err 1}
\|v-\mathcal{I}_{F_j}v\|_{0,F_j}\leq Ch^{k+1/2}|v|_{k+1,K},~\forall v\in H^2(K),~j=4,5,~k=0,1.
\end{equation}
For $s>0$, we define the interpolation operator $\mathcal{I}_K:H^{3/2+s}(K)\rightarrow P_K$
according to Theorem \ref{th: unisol 1} such that $\sigma_i(\mathcal{I}_Kv)=\sigma_i(v)$, $i=1,2,\ldots,11$.
Since $\mathcal{I}_Kv=v$ for all $v\in P_2(K)$, the Bramble-Hilbert lemma again gives
\begin{equation}
\label{e: interp err 1}
|v-\mathcal{I}_Kv|_{j,K}\leq Ch^{3-j}|v|_{3,K},~\forall v\in H^3(K),~j=0,1,2,3.
\end{equation}
The global interpolation operator $\mathcal{I}_h:H_0^1(\Omega)\cap H^{3/2+s}(\Omega)\rightarrow V_h$
is naturally set as $\mathcal{I}_h|_K=\mathcal{I}_K$, $\forall K\in\mathcal{T}_h$.

For each $K$, let $\mathcal{P}_{k,K}$ be the $L^2$-projection operator from $L^2(K)$ to $P_k(K)$, $k=0,1$.
Similarly, $\mathcal{P}_{k,F_i}$ denotes the linear operator on $H^1(K)$ $L^2$-projecting
$H^1(K)|_{F_i}$ to $P_k(F_i)$, $i=1,2,\ldots,5$, $k=0,1$.
The following result asserts a second order convergence rate in $|\cdot|_{1,h}$.

\begin{theorem}
\label{th: converge 1}
Let $u\in H_0^1(\Omega)\cap H^3(\Omega)$ and $u_h\in V_h$ be the solutions of (\ref{e: weak 1})
and (\ref{e: discrete weak 1}), respectively.
Then
\begin{equation}
\label{e: H1 1}
|u-u_h|_{1,h}\leq Ch^2|u|_3.
\end{equation}
\end{theorem}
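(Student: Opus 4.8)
The plan is to place the estimate inside the standard nonconforming framework, i.e. the second Strang lemma, which bounds $|u-u_h|_{1,h}$ by the sum of an approximation part $\inf_{v_h\in V_h}|u-v_h|_{1,h}$ and a consistency part $\sup_{0\neq w_h\in V_h}|E_h(u,w_h)|/|w_h|_{1,h}$, where $E_h(u,w_h)=\sum_{K}(\nabla u,\nabla w_h)_K-(f,w_h)$. The approximation part is immediate: choosing $v_h=\mathcal{I}_h u$ and invoking (\ref{e: interp err 1}) with $j=1$ gives $|u-\mathcal{I}_h u|_{1,h}\le Ch^2|u|_3$. All the real work therefore concerns the consistency part. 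Integrating by parts cell by cell and using $-\Delta u=f$ rewrites $E_h(u,w_h)$ as $\sum_{F\in\mathcal{F}_h}\int_F\frac{\partial u}{\partial\bm{n}_F}[w_h]_F\,\mathrm{d}\sigma$; since $u\in H^3(\Omega)$ the normal derivative is single-valued across interior faces, and on boundary faces $[w_h]_F=w_h|_K$ vanishes at the boundary DoFs. I would then split the faces into the vertical side faces and the horizontal faces $F_4,F_5$, which require genuinely different treatments.

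For a side face $F$, the three moment identities in Lemma \ref{lemma: face relation 1} express $\int_F w_h$, $\int_F w_h\xi_i$ and $\int_F w_h\eta_i$ through vertex and centroid values, all of which are continuous across $F$; hence $[w_h]_F$ is $L^2(F)$-orthogonal to $P_1(F)$. Using this I would first replace $\frac{\partial u}{\partial\bm{n}_F}$ by $\frac{\partial u}{\partial\bm{n}_F}-\mathcal{P}_{1,F}\frac{\partial u}{\partial\bm{n}_F}$ at no cost, split the jump into its two element traces, and on each trace subtract its own $P_1(F)$-projection, which is legitimate because the remainder $\frac{\partial u}{\partial\bm{n}_F}-\mathcal{P}_{1,F}\frac{\partial u}{\partial\bm{n}_F}$ is itself orthogonal to $P_1(F)$. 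A scaled Bramble--Hilbert/trace estimate gives $\|\frac{\partial u}{\partial\bm{n}_F}-\mathcal{P}_{1,F}\frac{\partial u}{\partial\bm{n}_F}\|_{0,F}\le Ch^{3/2}|u|_{3,K}$, while each $\|w_h|_K-\mathcal{P}_{1,F}w_h|_K\|_{0,F}\le Ch^{1/2}|w_h|_{1,K}$ follows from a trace/Poincar\'e inequality; the product is $O(h^2)$ per face, and summation is controlled by Cauchy--Schwarz.

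The horizontal faces are the crux, since Lemma \ref{lemma: face relation top 1} supplies only a zero-mean condition, which by itself would yield merely first order. Here I would exploit Lemma \ref{lemma: face equal 1}. For each cell $K$ introduce the in-plane remainder $e_K:=w_h|_{F_5}-\mathcal{I}_{F_5}w_h=w_h|_{F_4}-\mathcal{I}_{F_4}w_h$, a single function on the base triangle $T_K$ that is identical on top and bottom. Because the nodal data are continuous across interior horizontal faces, the $\mathcal{I}_F$-parts cancel in the jump, so on a shared horizontal face $[w_h]_F$ equals the difference of the two adjacent remainders. Reorganizing $\sum_{F\ \mathrm{horizontal}}\int_F\frac{\partial u}{\partial\bm{n}_F}[w_h]_F$ cell by cell, the two horizontal faces of $K$ share the same base triangle and the same $e_K$, so the contribution of $K$ collapses to $\int_{T_K}\big(\partial_3 u|_{F_5}-\partial_3 u|_{F_4}\big)e_K\,\mathrm{d}x_1\mathrm{d}x_2$.

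From here two gains produce the extra powers of $h$. First, $\partial_3 u|_{F_5}-\partial_3 u|_{F_4}=\int_{F_4}^{F_5}\partial_{33}u\,\mathrm{d}x_3$ supplies one order through the height $\sim h$ of $K$ and introduces a third derivative of $u$. Second, $e_K$ is a quadratic vanishing at the three vertices and the centroid of $T_K$, hence of zero mean, so I may subtract the planar average of that normal-derivative difference and apply Poincar\'e on $T_K$ to gain a second order. Bounding $\|e_K\|_{0,T_K}\le Ch^{1/2}|w_h|_{1,K}$ via (\ref{e: face preserve 1}) (so that $\mathcal{I}_F$ reproduces $P_1(F)$) together with a trace/Poincar\'e estimate, the cell contribution is $Ch^2|u|_{3,K}|w_h|_{1,K}$; boundary horizontal faces fit the same bookkeeping since the missing neighbour merely drops one remainder. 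Summing the side- and horizontal-face estimates gives $|E_h(u,w_h)|\le Ch^2|u|_3|w_h|_{1,h}$, and Strang's lemma closes the proof. The main obstacle is precisely the horizontal faces: the weak continuity there is too weak for a direct $P_1$-orthogonality argument, and only the special identity of Lemma \ref{lemma: face equal 1}, which ties the top and bottom traces together, lets the two faces be paired into a single cell integral where the $x_3$-smoothness of $u$ and the zero mean of $e_K$ can be spent to recover the optimal order.
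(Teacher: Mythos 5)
Your proposal is correct and follows essentially the same route as the paper: the Strang lemma, the identical splitting of the consistency error into side faces (handled via the $P_1(F)$-orthogonality of the jump supplied by Lemma \ref{lemma: face relation 1}) and horizontal faces (handled via Lemma \ref{lemma: face equal 1}, the zero-mean property from Lemma \ref{lemma: face relation top 1}, and the telescoping of the $\mathcal{I}_F w_h$ terms across interior faces). The only cosmetic difference is the endgame on the horizontal faces: the paper inserts the cellwise projection $\mathcal{P}_{1,K}\frac{\partial u}{\partial x_3}$ and shows the projected part pairs to zero against the zero-mean remainder, whereas you write $\frac{\partial u}{\partial x_3}\big|_{F_5}-\frac{\partial u}{\partial x_3}\big|_{F_4}$ as an $x_3$-integral of $\frac{\partial^2 u}{\partial x_3^2}$ and apply Poincar\'e on the base triangle --- both yield the same $h^{3/2}\cdot h^{1/2}$ count and the estimate $Ch^2|u|_3|w_h|_{1,h}$.
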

\begin{proof}
The convergence analysis begins with the Strang lemma
\begin{equation}
\label{e: Strang}
|u-u_h|_{1,h}\leq C\left(\inf_{v_h\in V_h}|u-v_h|_{1,h}
+\sup_{w_h\in V_h}\frac{|E_{1,h}(u,w_h)|}{|w_h|_{1,h}}\right)
\end{equation}
with
\[
E_{1,h}(u,w_h)=\sum_{K\in\mathcal{T}_h}(\nabla u,\nabla w_h)_K-(f,w_h).
\]
It follows from (\ref{e: interp err 1}) that
\[
\inf_{v_h\in V_h}|u-v_h|_{1,h}\leq|u-\mathcal{I}_hu|_{1,h}\leq Ch^2|u|_3,
\]
therefore we only need to estimate the consistency error $E_{1,h}(u,w_h)$.

For the convenience of our description,
we rewrite the faces $F_i\subset\partial K$ as $F_{i,K}$, $i=1,2,\ldots,5$.
Integrating by parts elementwise gives
\begin{equation}
\label{e: E1h}
\begin{aligned}
E_{1,h}(u,w_h)&=\sum_{K\in\mathcal{T}_h}\int_{\partial K}\frac{\partial u}{\partial\bm{n}}w_h\,\mathrm{d}\sigma\\
&=\sum_{K\in\mathcal{T}_h}\int_{F_{1,K}\cup F_{2,K}\cup F_{3,K}}\frac{\partial u}{\partial\bm{n}}w_h\,\mathrm{d}\sigma
+\sum_{K\in\mathcal{T}_h}\int_{F_{4,K}\cup F_{5,K}}\frac{\partial u}{\partial\bm{n}}w_h\,\mathrm{d}\sigma\\
&:=I_1+I_2.
\end{aligned}
\end{equation}
Notice by (\ref{e: face relation 1}) and the weak continuity of $w_h$ that
\[
\int_F[w_h]_F\xi\,\mathrm{d}\sigma=0,~
\forall \xi\in P_1(F),~\mbox{if $F=F_{i,K}$ for some $K\in\mathcal{T}_h$, $i=1,2,3$}.
\]
Hence, we derive
\begin{equation}
\label{e: I1}
\begin{aligned}
|I_1|&=\left|\sum_{K\in\mathcal{T}_h}\sum_{i=1}^3\int_{F_{i,K}}
\left(\frac{\partial u}{\partial\bm{n}}-\mathcal{P}_{1,F_{i,K}}\frac{\partial u}{\partial\bm{n}}\right)\left(w_h-\mathcal{P}_{0,F_{i,K}}w_h\right)\,\mathrm{d}\sigma\right|\\
&\leq\sum_{K\in\mathcal{T}_h}\sum_{i=1}^3\left\|\frac{\partial u}{\partial\bm{n}}-\mathcal{P}_{1,F_{i,K}}\frac{\partial u}{\partial\bm{n}}\right\|_{0,F_{i,K}}\left\|w_h-\mathcal{P}_{0,F_{i,K}}w_h\right\|_{0,F_{i,K}}\\
&\leq\sum_{K\in\mathcal{T}_h}\sum_{i=1}^3Ch^{3/2}|u|_{3,K}Ch^{1/2}|w_h|_{1,K}
\leq Ch^2|u|_3|w_h|_{1,h}.
\end{aligned}
\end{equation}
As far as $I_2$ is concerned,
noting that if $F=F_{5,K_1}=F_{4,K_2}$ for some $K_1$ and $K_2$,
by the weak continuity of $w_h$ across $F$
we find $\mathcal{I}_{F_{5,K_1}}(w_h|_{K_1})=\mathcal{I}_{F_{4,K_2}}(w_h|_{K_2})$.
As a result,
\[
|I_2|=\left|\sum_{K\in\mathcal{T}_h}\left(
\int_{F_{5,K}}\frac{\partial u}{\partial x_3}
(w_h-\mathcal{I}_{F_{5,K}}w_h)\,\mathrm{d}\sigma
-\int_{F_{4,K}}\frac{\partial u}{\partial x_3}
(w_h-\mathcal{I}_{F_{4,K}}w_h)\,\mathrm{d}\sigma\right)\right|.
\]
Invoking (\ref{e: face equal 1}), one can rewrite $|I_2|$ as
\begin{equation}
\label{e: I2 temp}
\begin{aligned}
|I_2|=&\Bigg|\sum_{K\in\mathcal{T}_h}\Bigg(
\int_{F_{5,K}}\left(\frac{\partial u}{\partial x_3}-\mathcal{P}_{1,K}\frac{\partial u}{\partial x_3}\right)
(w_h-\mathcal{I}_{F_{5,K}}w_h)\,\mathrm{d}\sigma\\
&~~~~~~~~~~~~~~~~-\int_{F_{4,K}}\left(\frac{\partial u}{\partial x_3}-\mathcal{P}_{1,K}\frac{\partial u}{\partial x_3}\right)
(w_h-\mathcal{I}_{F_{4,K}}w_h)\,\mathrm{d}\sigma
\Bigg)\\
&+\sum_{K\in\mathcal{T}_h}\int_{F_{5,K}}
\left(\left(\mathcal{P}_{1,K}\frac{\partial u}{\partial x_3}\right)\Big|_{F_{5,K}}
-\left(\mathcal{P}_{1,K}\frac{\partial u}{\partial x_3}\right)\Big|_{F_{4,K}}\right)
(w_h-\mathcal{I}_{F_{5,K}}w_h)\,\mathrm{d}\sigma\Bigg|.
\end{aligned}
\end{equation}
Since $\mathcal{I}_{F_{5,K}}(w_h|_K)\in\mathrm{span}\{f_j,~j=1,2,3,4\}\subset P_K$
and $w_h|_K-\mathcal{I}_{F_{5,K}}(w_h|_K)$ vanishes at the three vertices and the centroid of $F_{5,K}$,
it follows from (\ref{e: face relation top 1}) that
\[
\int_{F_{5,K}}w_h-\mathcal{I}_{F_{5,K}}w_h\,\mathrm{d}\sigma=0.
\]
On the other hand, for any function $v\in P_1(K)$,
$v|_{F_{5,K}}-v|_{F_{4,K}}$ is a constant.
Therefore the second summation in (\ref{e: I2 temp}) vanishes,
which gives by (\ref{e: interp face err 1}) that
\begin{equation}
\label{e: I2}
\begin{aligned}
|I_2|&\leq\sum_{K\in\mathcal{T}_h}\sum_{i=4}^5\left\|\frac{\partial u}{\partial x_3}-\mathcal{P}_{1,K}\frac{\partial u}{\partial x_3}\right\|_{0,F_{i,K}}\left\|w_h-\mathcal{I}_{F_{i,K}}w_h\right\|_{0,F_{i,K}}\\
&\leq\sum_{K\in\mathcal{T}_h}\sum_{i=4}^5Ch^{3/2}|u|_{3,K}Ch^{1/2}|w_h|_{1,K}
\leq Ch^2|u|_3|w_h|_{1,h}.
\end{aligned}
\end{equation}
Finally, substituting (\ref{e: I1}) and (\ref{e: I2}) into (\ref{e: E1h}) we get
\[
|E_{1,h}(u,w_h)|\leq Ch^2|u|_3|w_h|_{1,h},~\forall w_h\in V_h,
\]
which completes the proof.
\end{proof}

\begin{remark}
If $\Omega$ is convex and $f\in L^2(\Omega)$,
then by a standard Aubin–Nitsche duality argument,
the following $L^2$-error estimate is easily obtained:
\begin{equation}
\label{e: L2 1}
\|u-u_h\|_0\leq Ch^3|u|_3.
\end{equation}
\end{remark}

\begin{remark}
According to (\ref{e: face relation top 1}) and (\ref{e: face relation 1}),
the face centroids can be directly replaced by face integrals in the DoF set $\Sigma_K$ for each $K\in\mathcal{T}_h$.
As a consequence, this may provide the ability for approximating more complicated problems.
A typical example is the following mixed finite element for the Stokes problem.
In fact, if $B_h$ denotes the elementwise bubble function space,
then the velocity space can be selected as $(V_h\oplus B_h)^3$,
while the pressure is approximated by piecewise discontinuous $P_1$ element.
According to the stability argument and the convergence analysis in \cite{Boffi2013},
this element is stable with a second order convergence rate for both the velocity and the pressure.
\end{remark}

\begin{remark}
Prisms are commonly used to connect other 3D shapes in hybrid meshes.
Let $\mathcal{T}_h$ be a mixed mesh consisting of both triangular prisms and bricks,
then we can select our element over triangular prism cells,
and select the 14-node Meng-Sheen-Luo-Kim element \cite{Meng2014} over brick cells.
The error estimates in (\ref{e: H1 1}) and (\ref{e: L2 1}) are still valid.
\end{remark}

\section{A nonconforming triangular prism element for 3D fourth order elliptic problems}
\label{s: element 2}

\subsection{Construction of the element and basic properties}

Under the same notations in Subsection \ref{ss: element 1}, we provide the second 11-node element over a triangular prism.

\begin{definition}
\label{d: element 2}
The element $(K,Q_K,T_K)$ is defined as follows:
\begin{itemize}
\setlength{\itemsep}{-\itemsep}
\item $K$ is the given triangular prism;
\item $Q_K=P_2(K)\oplus\mathrm{span}\{\lambda_0\lambda_4\lambda_5\}$ is the shape function space;
\item $T_K=\{\tau_i,~i=1,2,\ldots,11\}$ is the DoF set where
\[
\begin{aligned}
\tau_i(v)&=v(V_i),~i=1,2,\ldots,6,\\
\tau_{i+6}(v)&=\frac{\partial v}{\partial\bm{n}}(M_i),~i=1,2,\ldots,5,
~\forall v\in Q_K.
\end{aligned}
\]
\end{itemize}
\end{definition}

\begin{theorem}
\label{th: unisol 2}
The element $(K,Q_K,T_K)$ is well-defined.
\end{theorem}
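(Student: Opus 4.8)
The plan is to exploit the product structure of the prism and reduce the claim to an injectivity statement. Since $\dim Q_K=\dim P_2(K)+1=11=|T_K|$, the element is well-defined as soon as the only $v\in Q_K$ annihilated by every $\tau_i$ is $v\equiv 0$ (one could instead exhibit an explicit nodal basis, as in the proof of Theorem \ref{th: unisol 1}, but the injectivity route is shorter here). So I would fix $v\in Q_K$ with $v(V_i)=0$ for $i=1,\ldots,6$ and $\frac{\partial v}{\partial\bm{n}}(M_i)=0$ for $i=1,\ldots,5$, write $v=p+c\,\lambda_0\lambda_4\lambda_5$ with $p\in P_2(K)$, and aim to show $p=0$ and $c=0$.

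The first key observation is that the enrichment $\lambda_0\lambda_4\lambda_5$ is a function of $x_3$ alone: it vanishes at all six vertices (each lies on $F_4$ or $F_5$) and, having a purely vertical gradient, has vanishing normal derivative on the three side faces $F_1,F_2,F_3$ (whose normals are horizontal). Consequently $c$ drops out of the six vertex DoFs and of $\tau_7,\tau_8,\tau_9$, so these nine conditions constrain $p$ only. Rescaling the height so that $F_4$ sits at $t=0$ and $F_5$ at $t=1$, I would decompose $p=a(x_1,x_2)+b(x_1,x_2)\,t+e\,t^2$ with $a\in P_2$, $b\in P_1$ in the horizontal variables and $e\in\mathbb{R}$. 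Evaluating at the bottom vertices gives $a(V_i)=0$; evaluating at the top vertices then forces $b(V_i)=-e$ for $i=1,2,3$, and since a linear function agreeing at the three vertices of the triangle is constant, $b\equiv -e$. Hence $a$ is a quadratic vanishing at the three vertices, i.e.\ $a=\alpha\lambda_1\lambda_2+\beta\lambda_2\lambda_3+\gamma\lambda_3\lambda_1$, and $p=a+e\,t(t-1)$.

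Next I would impose the side-face conditions $\tau_7,\tau_8,\tau_9$. Because $t(t-1)$ contributes nothing horizontally, $\frac{\partial p}{\partial\bm{n}_i}(M_i)=\frac{\partial a}{\partial\bm{n}_i}(M_i)$, and at $M_i$ the normal $\bm{n}_i$ to $F_i$ is parallel to $\nabla\lambda_i$. Using $\nabla\lambda_1+\nabla\lambda_2+\nabla\lambda_3=0$ and evaluating $\nabla a$ at the edge midpoints (where two barycentric coordinates equal $\tfrac12$) yields the homogeneous system $\beta=\alpha+\gamma$, $\gamma=\alpha+\beta$, $\alpha=\beta+\gamma$; summing gives $\alpha+\beta+\gamma=0$, whence $\alpha=\beta=\gamma=0$ and $a\equiv 0$. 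At this stage $v=e\,t(t-1)+c\,\lambda_0\lambda_4\lambda_5$ is a function of $x_3$ alone. The remaining two DoFs $\tau_{10},\tau_{11}$ are, up to a nonzero factor, the values of $\frac{\mathrm{d}v}{\mathrm{d}t}$ at $t=0$ and $t=1$; writing $\lambda_0\lambda_4\lambda_5$ as a cubic in $t$ and differentiating produces a $2\times 2$ system whose determinant is nonzero, forcing $c=e=0$. Therefore $v\equiv 0$, completing the argument.

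The main obstacle I anticipate is bookkeeping rather than a genuine difficulty: correctly computing the three side-face normal-derivative conditions via the barycentric gradient identity and confirming that the resulting $3\times 3$ system is nonsingular, and likewise checking that the final $2\times 2$ system coming from $\tau_{10},\tau_{11}$ has nonzero determinant (this is where the specific normalisations of $\lambda_0,\lambda_4,\lambda_5$ enter). Both reduce to short explicit calculations, but they are the places where a sign or normalisation slip would break the proof.
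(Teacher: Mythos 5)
Your proof is correct, but it takes a genuinely different route from the paper. The paper proves unisolvency constructively: it builds the explicit nodal basis $\{\psi_i\}$ of $Q_K$ by splitting the vertex functions into a symmetric part borrowed from the 2D Morley basis $\{p_i,q_i\}$ of Wang--Xu on $F_4$ and an antisymmetric part $\lambda_0(\lambda_i-\tfrac{1}{6}\lambda_4\lambda_5)$, then verifies the duality relations $\tau_i(\psi_j)=\delta_{i,j}$ by direct computation with the gradients of the $\lambda_i$. You instead use the dimension count $\dim Q_K=11=|T_K|$ and show injectivity of the DoF map, peeling off the enrichment $\lambda_0\lambda_4\lambda_5$ (which is invisible to the six vertex values and the three side-face normal derivatives), reducing to the layered decomposition $p=a+bt+et^2$, and killing $a$, then $e$ and $c$, by three successive linear systems. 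Each of your intermediate steps checks out: the side-face system is indeed $\beta=\alpha+\gamma$ and its cyclic permutations, which forces $\alpha=\beta=\gamma=0$; and with the paper's normalisations one finds $\lambda_4=2t$, $\lambda_5=2(t-1)$, $\lambda_0=2t-1$, so the final $2\times2$ system has matrix rows $(-1,4)$ and $(1,4)$ with determinant $-8\neq 0$. The trade-off is clear: your argument is more elementary and self-contained (it does not invoke the 2D Morley basis), whereas the paper's constructive proof simultaneously delivers the explicit basis representation that is needed for implementation and advertised in the introduction. One small point you should make explicit is that $\lambda_0\lambda_4\lambda_5\notin P_2(K)$ (it is a genuine cubic in $x_3$), which is what justifies $\dim Q_K=11$ in the first place.
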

\begin{proof}
We also show the unisolvency by giving a nodal basis representation of $Q_K$ with respect to $T_K$,
with the aid of the 2D Morley element over $F_4$ (or equivalently $F_5$).
Indeed, set
\[
\begin{aligned}
&q_i(x_1,x_2,x_3)=\frac{1}{\|\nabla\lambda_i\|}\lambda_i(\lambda_i-1),\\
&p_i(x_1,x_2,x_3)=\lambda_i+2\lambda_j\lambda_k-\nabla\lambda_j^T\nabla\lambda_k
\left(\frac{q_j}{\|\nabla\lambda_j\|}+\frac{q_k}{\|\nabla\lambda_k\|}\right),\\
&i,j,k=1,2,3,~i\neq j\neq k\neq i,
\end{aligned}
\]
then by Theorem 1 in \cite{Wang2006},
$\{p_i,q_i,~i=1,2,3\}$ is the nodal basis of the 2D Morley element over $F_4$ such that
\begin{equation}
\label{e: Morley}
\begin{aligned}
p_i(V_j)&=\frac{\partial q_i}{\partial\bm{n}_j}(\widetilde{M}_j)
=\frac{\partial q_i}{\partial\bm{n}_j}(M_j)=\delta_{i,j},\\
\frac{\partial p_i}{\partial\bm{n}_j}(\widetilde{M}_j)
&=\frac{\partial p_i}{\partial\bm{n}_j}(M_j)
=q_i(V_j)=0,~i,j=1,2,3,
\end{aligned}
\end{equation}
where $\widetilde{M}_i$ is the midpoint of $V_jV_k$,
and $\bm{n}_i$ is the unit outward normal vector of the face $F_i$,
or equivalently, the unit outward normal vector of $V_jV_k$ in the face $F_4$,
$i\neq j\neq k\neq i$, $i=1,2,3$.
Next, we define
\[
\begin{aligned}
\psi_i&=\frac{1}{2}\left(p_i-\lambda_0\left(\lambda_i-\frac{1}{6}\lambda_4\lambda_5\right)\right),\\
\psi_{i+3}&=\frac{1}{2}\left(p_i+\lambda_0\left(\lambda_i-\frac{1}{6}\lambda_4\lambda_5\right)\right),
~i=1,2,3.
\end{aligned}
\]
Then using (\ref{e: Morley}) and the values of $\lambda_i$ at the vertices we see
\begin{equation}
\label{e: nodal 2 1}
\psi_i(V_j)=\delta_{i,j},~i,j=1,2,\ldots,6.
\end{equation}
A closer observation shows that $\nabla\lambda_0=\nabla\lambda_4=\nabla\lambda_5$
and $\nabla\lambda_i^T\nabla\lambda_0=0$,
$i=1,2,3$, therefore by a direct computation and (\ref{e: Morley}),
\begin{equation}
\label{e: nodal 2 2}
\begin{aligned}
\frac{\partial\psi_i}{\partial\bm{n}}(M_j)
=&\frac{\nabla\lambda_j^T}{2\|\nabla\lambda_j\|}
\left(\lambda_0\nabla\lambda_i+
\left(\lambda_i-\frac{1}{6}(\lambda_4\lambda_5+\lambda_0\lambda_5+\lambda_0\lambda_4)
\right)\nabla\lambda_0
\right)\Bigg|_{(x_1,x_2,x_3)=M_j}=0,\\
\frac{\partial\psi_{i+3}}{\partial\bm{n}}(M_j)
=&-\frac{\nabla\lambda_j^T}{2\|\nabla\lambda_j\|}
\left(\lambda_0\nabla\lambda_i+
\left(\lambda_i-\frac{1}{6}(\lambda_4\lambda_5+\lambda_0\lambda_5+\lambda_0\lambda_4)
\right)\nabla\lambda_0
\right)\Bigg|_{(x_1,x_2,x_3)=M_j}=0,\\
&i=1,2,3,~j=1,2,3,4.
\end{aligned}
\end{equation}
A similar argument works for $j=5$ as well.
Moreover, we can also write
\[
\begin{aligned}
\psi_{i+6}&=q_i,~i=1,2,3,\\
\psi_{10}&=\frac{1}{4\|\nabla\lambda_0\|}\lambda_4\lambda_5(1-\lambda_0),\\
\psi_{11}&=\frac{1}{4\|\nabla\lambda_0\|}\lambda_4\lambda_5(1+\lambda_0).
\end{aligned}
\]
Notice that all $q_i$ are independent of $x_3$,
and $\psi_{10}$, $\psi_{11}$ are independent of $x_1,x_2$.
Thus,
\begin{equation}
\label{e: nodal 2 3}
\begin{aligned}
\tau_j(\psi_{i+6})&=0,~i=1,2,3,~j=10,11,\\
\tau_j(\psi_{10})&=\tau_j(\psi_{11})=0,~j=7,8,9.
\end{aligned}
\end{equation}
On the other hand, since
\[
\begin{aligned}
\frac{\partial\psi_{10}}{\partial x_3}(M_j)
&=\frac{\nabla\lambda_0^T}{\|\nabla\lambda_0\|}\cdot
\frac{1}{4\|\nabla\lambda_0\|}
\left((\lambda_4+\lambda_5)(1-\lambda_0)-\lambda_4\lambda_5\right)\nabla\lambda_0
\Big|_{(x_1,x_2,x_3)=M_j},\\
\frac{\partial\psi_{11}}{\partial x_3}(M_j)
&=\frac{\nabla\lambda_0^T}{\|\nabla\lambda_0\|}\cdot
\frac{1}{4\|\nabla\lambda_0\|}
\left((\lambda_4+\lambda_5)(1+\lambda_0)+\lambda_4\lambda_5\right)\nabla\lambda_0
\Big|_{(x_1,x_2,x_3)=M_j},~j=4,5,
\end{aligned}
\]
one must have
\begin{equation}
\label{e: nodal 2 4}
\tau_i(\psi_j)=\delta_{i,j},~i,j=10,11.
\end{equation}
Collecting (\ref{e: Morley})--(\ref{e: nodal 2 4}),
we see that $\psi_i\in Q_K$ is the nodal basis function with respect to $\tau_i$, $i=1,2,\ldots,11$,
which is the desired conclusion.
\end{proof}

For $F\subset\partial K$, define the nodal interpolation operators $\mathcal{J}_F$ from
$H^{3/2+s}(K)$ to $\mathrm{span}\{\lambda_i,~i=1,2,3\}=P_1(F_j)$, $j=4,5$ by setting
\[
(\mathcal{J}_{F_4}v)(V_i)=v(V_i),~(\mathcal{J}_{F_5}v)(V_{i+3})=v(V_{i+3}),~i=1,2,3,~j=4,5.
\]
The following lemmas are parallel to Lemmas \ref{lemma: face equal 1}--\ref{lemma: face relation 1}, respectively.

\begin{lemma}
For all $v\in Q_K$, it holds that
\begin{equation}
\label{e: face equal 2}
v|_{F_4}-\mathcal{J}_{F_4}v=v|_{F_5}-\mathcal{J}_{F_5}v.
\end{equation}
\end{lemma}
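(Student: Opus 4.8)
The plan is to mirror the proof of Lemma~\ref{lemma: face equal 1}: since (\ref{e: face equal 2}) is linear in $v$, it suffices to verify it on a convenient spanning set of $Q_K$. Recalling that $\lambda_1,\lambda_2,\lambda_3$ depend only on $(x_1,x_2)$ while $\lambda_0,\lambda_4,\lambda_5$ depend only on $x_3$, I would take
\[
\lambda_i,\quad \lambda_j\lambda_k,\quad \lambda_i\lambda_0,\quad \lambda_0^2,\quad \lambda_0\lambda_4\lambda_5,\qquad i,j,k=1,2,3,~i\neq j\neq k\neq i,
\]
whose first ten members span $P_2(K)$ and whose last member accounts for the extra direction in $Q_K=P_2(K)\oplus\mathrm{span}\{\lambda_0\lambda_4\lambda_5\}$. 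The guiding principle is that whenever $v|_{F_j}\in P_1(F_j)=\mathrm{span}\{\lambda_1,\lambda_2,\lambda_3\}$, the operator $\mathcal{J}_{F_j}$ reproduces it, so that the quantity $v|_{F_j}-\mathcal{J}_{F_j}v$ vanishes identically.

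First I would dispose of the six functions $\lambda_i$ and $\lambda_j\lambda_k$, which are independent of $x_3$. For such $v$ one has $v(V_i)=v(V_{i+3})$, whence $\mathcal{J}_{F_4}v=\mathcal{J}_{F_5}v$ as functions of $(\lambda_1,\lambda_2,\lambda_3)$, and likewise $v|_{F_4}=v|_{F_5}$; both sides of (\ref{e: face equal 2}) therefore coincide. Next, for the $x_3$-dependent members $\lambda_i\lambda_0$ and $\lambda_0^2$, I would use that $\lambda_0$ is constant on each of $F_4$ and $F_5$; consequently their restrictions to $F_4$ and to $F_5$ lie in $\mathrm{span}\{\lambda_1,\lambda_2,\lambda_3\}=P_1(F_j)$, so by the reproduction property each side of (\ref{e: face equal 2}) is individually zero.

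Finally, the new cubic bubble $\lambda_0\lambda_4\lambda_5$ is the only genuinely new ingredient, and it turns out to be the easiest: since $\lambda_4$ vanishes on $F_4$ and $\lambda_5$ vanishes on $F_5$, the product $\lambda_0\lambda_4\lambda_5$ vanishes on both triangular faces, so its restriction and its interpolant are both zero on each face, and both sides again reduce to zero. Collecting these cases over the spanning set and invoking linearity yields (\ref{e: face equal 2}) for all $v\in Q_K$.

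I do not anticipate a genuine obstacle here; the only point demanding a little care is confirming that the chosen ten functions actually span $P_2(K)$ — most conveniently by matching the four $x_3$-involving monomials $x_3,x_3^2,x_1x_3,x_2x_3$ against the $x_3$-dependent part of $\lambda_0,\lambda_0^2$ and $\lambda_i\lambda_0$ — after which the verification is entirely mechanical and rests only on the elementary facts that $\lambda_{1,2,3}$ are $x_3$-free, $\lambda_0$ is $(x_1,x_2)$-free, and $\lambda_4,\lambda_5$ annihilate $F_4,F_5$ respectively.
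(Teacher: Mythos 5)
Your proof is correct and takes essentially the approach the paper intends: the paper omits this proof, stating only that it is ``similar and simpler than that of Lemma~\ref{lemma: face equal 1},'' and your argument is precisely that adaptation, using the same spanning set $\{\lambda_i,\,\lambda_j\lambda_k,\,\lambda_i\lambda_0,\,\lambda_0^2\}$ of $P_2(K)$ plus the bubble $\lambda_0\lambda_4\lambda_5$. Each case is correctly reduced either to the equality of the two faces' data for $x_3$-independent functions, to the reproduction of $P_1(F_j)=\mathrm{span}\{\lambda_1,\lambda_2,\lambda_3\}$ by $\mathcal{J}_{F_j}$, or to the vanishing of $\lambda_4$ on $F_4$ and $\lambda_5$ on $F_5$, so nothing is missing.
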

\begin{proof}
The proof is similar and simpler than that of Lemma \ref{lemma: face equal 1}, and thus omitted.
\end{proof}

\begin{lemma}
\label{lemma: face relation top 2}
For all $v\in Q_K$, we have
\begin{equation}
\label{e: face relation top 2}
\frac{1}{|F_j|}\int_{F_j}\frac{\partial v}{\partial\bm{n}}\,\mathrm{d}\sigma=\frac{\partial v}{\partial\bm{n}}(M_j),
~j=1,2,\ldots,5.
\end{equation}
\end{lemma}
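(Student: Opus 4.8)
The plan is to exploit the direct-sum structure $Q_K=P_2(K)\oplus\mathrm{span}\{\lambda_0\lambda_4\lambda_5\}$ from Definition \ref{d: element 2} together with the linearity of both sides of (\ref{e: face relation top 2}) in $v$, so that it suffices to check the identity separately for an arbitrary $v\in P_2(K)$ and for the single enrichment function $v=\lambda_0\lambda_4\lambda_5$. The geometric input I would use is that each $F_j$ is planar with a fixed outward normal $\bm{n}$: the side faces $F_1,F_2,F_3$ are parallel to the $x_3$-axis, so their normals lie in the $x_1x_2$-plane and have vanishing $x_3$-component, while $F_4,F_5$ lie in planes $x_3=\mathrm{const}$, so their normals are $\pm\bm{e}_3$. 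The single analytic fact I would invoke is that the mean of an affine function over a planar region equals its value at the centroid of that region; since $M_j$ is by definition the centroid of $F_j$, this at once yields (\ref{e: face relation top 2}) whenever $\partial v/\partial\bm{n}$ restricts to an affine function on $F_j$.

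For $v\in P_2(K)$ the argument is uniform across all five faces. Because $\bm{n}$ is constant on each planar face, $\partial v/\partial\bm{n}=\bm{n}\cdot\nabla v$ is a polynomial of degree at most one in $(x_1,x_2,x_3)$, hence an affine function when restricted to $F_j$. The centroid-averaging property then gives the equality on every face, regardless of whether $F_j$ is a rectangle or a triangle.

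For the enrichment term $v=\lambda_0\lambda_4\lambda_5$, I would use that $\lambda_0,\lambda_4,\lambda_5$ depend only on $x_3$, as recorded in Subsection \ref{ss: element 1}, so that $v$ is a function of $x_3$ alone and $\nabla v=(\partial v/\partial x_3)\,\bm{e}_3$. On the side faces $F_1,F_2,F_3$ the normal has vanishing $x_3$-component, whence $\partial v/\partial\bm{n}=0$ identically and both sides of (\ref{e: face relation top 2}) vanish. On $F_4,F_5$ one has $\bm{n}=\pm\bm{e}_3$ and $\partial v/\partial\bm{n}=\pm\,\partial v/\partial x_3$, which depends only on $x_3$ and is therefore constant on a face of the form $x_3=\mathrm{const}$; the mean of a constant equals its centroid value trivially.

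Combining the two cases through the direct sum completes the proof. I do not foresee a genuine obstacle here: the lemma is considerably simpler than its value-based analogue Lemma \ref{lemma: face relation top 1}, which needed exact quadrature rules precisely because the restriction of $v$ itself to a face can be genuinely quadratic, whereas the normal derivative of a $P_2$ function is only affine on each face. The only point deserving care is the bookkeeping of the normals (ensuring the vanishing $x_3$-component on the side faces and $\bm{n}=\pm\bm{e}_3$ on $F_4,F_5$) and a clean statement of the centroid-averaging property, after which each claim reduces to a one-line verification.
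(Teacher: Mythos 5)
Your proof is correct and takes essentially the same approach as the paper: the paper's proof likewise observes that $\partial(\lambda_0\lambda_4\lambda_5)/\partial\bm{n}$ restricts to a constant on every face $F_i$ and then invokes the exactness of the one-point centroid quadrature rule on triangles and rectangles, which handles the affine normal derivatives coming from the $P_2(K)$ part. Your write-up merely makes explicit the normal-vector bookkeeping and the centroid-averaging property that the paper leaves implicit.
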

\begin{proof}
A direct calculation shows
\[
\frac{\partial \lambda_0\lambda_4\lambda_5}{\partial \bm{n}}\Big|_{F_i}\in P_0(F_i),~i=1,2,\ldots,5.
\]
Therefore simply applying the centroid quadrature rule on triangles and rectangles will lead to the desired conclusion.
\end{proof}

\begin{lemma}
For each $i=1,2,3$, let $\bm{t}_i$ be a unit tangent vector on $F_i$ pointing from $V_j$ to $V_k$,
$1\leq j,k\leq 3$, $i\neq j\neq k\neq i$. Then
\begin{equation}
\label{e: face relation 2}
\begin{aligned}
\frac{1}{|V_jV_{j+3}|}\int_{F_i}\frac{\partial v}{\partial \bm{t}_i}\,\mathrm{d}\sigma
&=\frac{1}{2}(v(V_k)+v(V_{k+3})-v(V_j)-v(V_{j+3})),\\
\frac{1}{|V_jV_k|}\int_{F_i}\frac{\partial v}{\partial x_3}\,\mathrm{d}\sigma
&=\frac{1}{2}(v(V_{j+3})+v(V_{k+3})-v(V_j)-v(V_k)),\\
&\forall v\in Q_K,~i,j,k=1,2,3,~i\neq j\neq k\neq i.
\end{aligned}
\end{equation}
\end{lemma}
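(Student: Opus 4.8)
The plan is to use bilinearity together with the direct-sum decomposition $Q_K=P_2(K)\oplus\mathrm{span}\{\lambda_0\lambda_4\lambda_5\}$, so that it suffices to verify both identities for a generic $v\in P_2(K)$ and for the single cubic $v=\lambda_0\lambda_4\lambda_5$, and then add. Since $V_{i+3}$ lies directly above $V_i$ (the reason $\lambda_1,\lambda_2,\lambda_3$ are $x_3$-independent and $\lambda_0,\lambda_4,\lambda_5$ are functions of $x_3$ alone), the lateral edges are parallel to the $x_3$-axis and each side face $F_i$ is a rectangle. On $F_i$ I would therefore introduce orthonormal coordinates $(s,t)$, with $s$ measured along $\bm{t}_i$ over $[0,L]$, $L:=|V_jV_k|$, and $t$ measured along $x_3$ over $[0,H]$, $H:=|V_jV_{j+3}|$; then $\mathrm{d}\sigma=\mathrm{d}s\,\mathrm{d}t$ and the four vertices $V_j,V_k,V_{j+3},V_{k+3}$ occupy the corners $(0,0),(L,0),(0,H),(L,H)$.

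For $v\in P_2(K)$ the trace $v|_{F_i}$ is a general bivariate polynomial of degree at most two in $(s,t)$. I would integrate $\partial v/\partial s$ (resp. $\partial v/\partial t$) over the rectangle by the fundamental theorem of calculus and compare the result, after dividing by $H$ (resp. $L$), with the prescribed corner combination. The decisive observation is that in the first identity the pure $t$-monomials of $v$ (those independent of $s$) cancel from the combination $v(V_k)+v(V_{k+3})-v(V_j)-v(V_{j+3})$, and symmetrically the pure $s$-monomials cancel in the second identity; after this cancellation both sides collapse to the same linear-in-coefficients expression. This is precisely why a trapezoid-type corner evaluation can be exact on a genuinely quadratic function, and it is the only place where a little care with monomial bookkeeping is needed.

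For $v=\lambda_0\lambda_4\lambda_5$ I would use that $v$ is a function of $x_3$ alone. The first identity is then immediate: $\partial v/\partial\bm{t}_i\equiv0$ because $\bm{t}_i$ is orthogonal to the $x_3$-direction, while the right-hand side vanishes since $V_j,V_k$ share one $x_3$-level and $V_{j+3},V_{k+3}$ share another. For the second identity I would note that $\lambda_4$ vanishes on $F_4$ and $\lambda_5$ vanishes on $F_5$, so $v\equiv0$ on both the bottom and top faces; consequently all four corner values are zero, and integrating $\partial v/\partial x_3$ first in $t$ turns $\int_{F_i}\partial v/\partial x_3\,\mathrm{d}\sigma$ into the integral of the upper-minus-lower edge trace of $v$, which lies in $F_5$ and $F_4$ and hence vanishes. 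Both sides therefore agree.

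Adding the two cases recovers the statement for every $v\in Q_K$. I expect the $P_2$ computation to be the main, though still routine, obstacle: one must track which monomials survive in each corner combination, the key structural fact being that the pure quadratic term in the direction orthogonal to the differentiation always drops out, so that no higher-order quadrature rule is required.
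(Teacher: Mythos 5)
Your proposal is correct and follows essentially the same route as the paper: split $Q_K=P_2(K)\oplus\mathrm{span}\{\lambda_0\lambda_4\lambda_5\}$, treat the rectangular face $F_i$ with Fubini, use the fundamental theorem of calculus in one direction and exactness of the two-point (trapezoidal) evaluation in the other for the $P_2$ part, and dispose of the cubic by noting it depends only on $x_3$ and vanishes on both horizontal levels. The only cosmetic difference is the order of operations — the paper applies the trapezoidal rule in $x_3$ first and then integrates along the edge, while you integrate along the edge first and observe the cancellation of the pure transverse monomials; these are the same exactness fact.
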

\begin{proof}
We first consider $v\in P_2(K)$.
To this end, note that the integrand in the first equation is a linear polynomial with respect to $x_3$.
Therefore, if we write $v(x_1,x_2,x_3)|_{F_i}=\widetilde{v}(s,x_3)$ with arc-length parameter $s$ on the edge $V_jV_k$,
and set $v(V_j)=\widetilde{v}(s_j,x_{3,j})$,
then the trapezoidal rule gives
\[
\begin{aligned}
\frac{1}{|V_jV_{j+3}|}\int_{F_i}\frac{\partial v}{\partial \bm{t}_i}\,\mathrm{d}\sigma
&=\int_{V_jV_k}\left(\frac{1}{|V_jV_{j+3}|}\int_{V_jV_{j+3}}\frac{\partial \widetilde{v}(s,x_3)}{\partial s}\,\mathrm{d}x_3\right)\mathrm{d}s\\
&=\int_{V_jV_k}\frac{1}{2}\left(\frac{\partial \widetilde{v}(s,x_{3,j})}{\partial s}
+\frac{\partial \widetilde{v}(s,x_{3,j+3})}{\partial s}\right)\,\mathrm{d}s\\
&=\frac{1}{2}(\widetilde{v}(s_k,x_{3,j})-\widetilde{v}(s_j,x_{3,j})+\widetilde{v}(s_k,x_{3,j+3})-\widetilde{v}(s_j,x_{3,j+3})),
\end{aligned}
\]
which is precisely the first equation for $v\in P_2(K)$.
The second equation can also be obtained in a similar manner.
Next we consider $v=\lambda_0\lambda_4\lambda_5$ independent of $x_1$ and $x_2$.
It is clear that $\widetilde{v}(s,x_3)$ is independent of $s$,
so both sides of the first equation vanish.
Moreover,
\[
\frac{1}{|V_jV_k|}\int_{F_i}\frac{\partial v}{\partial x_3}\,\mathrm{d}\sigma
=\int_{V_jV_{j+3}}\frac{\partial \widetilde{v}(s,x_3)}{x_3}\,\mathrm{d}x_3
=\widetilde{v}(s,x_{3,j+3})-\widetilde{v}(s,x_{3,j})=0,
\]
which verifies the second equation, and the proof is done.
\end{proof}

\subsection{Applied to fourth order elliptic problems}

Let us now consider the following fourth order elliptic problem:
\begin{equation}
\label{e: model problem}
\begin{aligned}
&\Delta^2u=f~~~&\mbox{in}~\Omega,\\
&u=\frac{\partial u}{\partial\bm{n}}=0~&\mbox{on}~\partial\Omega
\end{aligned}
\end{equation}
with $f\in L^2(\Omega)$.
The weak form of problem (\ref{e: model problem}) is to seek $u\in H_0^2(\Omega)$ such that
\begin{equation}
\label{e: weak 2}
(\nabla^2 u,\nabla^2 v)=(f,v),~\forall v\in H_0^2(\Omega),
\end{equation}
where $\nabla^2 v$ is the Hessian matrix of $v$.

Let $\mathcal{T}_h$ be given as in Subsection \ref{ss: poisson}.
We set
\[
\begin{aligned}
W_h=\Big\{v\in& L^2(\Omega):~v|_K\in Q_K,~\forall K\in\mathcal{T}_h,
~\mbox{$v$ is continuous at $V\in\mathcal{V}_h^i$ and}\\
&\mbox{vanishes at $V\in \mathcal{V}_h^b$, $\frac{\partial v}{\partial\bm{n}_F}$ is continuous at the centroid of $F$}\\
&\mbox{if $F\in\mathcal{F}_h^i$, and vanishes at the centroid of $F$ if $F\in\mathcal{F}_h^b$}\Big\}.
\end{aligned}
\]
Then the discrete form of (\ref{e: weak 2}) reads:
Find $u_h\in W_h$ such that
\begin{equation}
\label{e: discrete weak 2}
\sum_{K\in\mathcal{T}_h}(\nabla^2 u_h,\nabla^2 v_h)_K=(f,v_h),~\forall v_h\in W_h.
\end{equation}
Again $|\cdot|_{2,h}$ is a norm on $W_h$,
so problem (\ref{e: discrete weak 1}) has a unique solution.

For each $K\in\mathcal{T}_h$,
an analogous counterpart of (\ref{e: interp face err 1}) holds:
\begin{equation}
\label{e: interp face err 2}
\|v-\mathcal{J}_{F_j}v\|_{0,F_j}\leq Ch^{k+1/2}|v|_{k+1,K},~\forall v\in H^2(K),~j=4,5,~k=0,1.
\end{equation}
For $s>0$, the nodal interpolation operator $\mathcal{J}_K:H^{5/2+s}(K)\rightarrow Q_K$
due to Theorem \ref{th: unisol 2} is defined through $\tau_i(\mathcal{J}_Kv)=\tau_i(v)$, $i=1,2,\ldots,11$
with the interpolation error
\begin{equation}
\label{e: interp err 2}
|v-\mathcal{J}_Kv|_{j,K}\leq Ch^{3-j}|v|_{3,K},~\forall v\in H^3(K),~j=0,1,2,3.
\end{equation}
The global interpolation operator is then set as $\mathcal{J}_h|_K=\mathcal{J}_K$
from $H_0^2(\Omega)\cap H^{5/2+s}(\Omega)$ to $W_h$.

\begin{theorem}
\label{th: converge 2}
Let $u\in H_0^2(\Omega)\cap H^3(\Omega)$ and $u_h\in W_h$ be the solutions of (\ref{e: weak 2})
and (\ref{e: discrete weak 2}), respectively.
Then
\begin{equation}
\label{e: H2 2}
|u-u_h|_{2,h}\leq Ch(|u|_3+h\|f\|_0).
\end{equation}
\end{theorem}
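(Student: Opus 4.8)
The plan is to mirror the proof of Theorem~\ref{th: converge 1}, replacing the first Strang lemma by its $H^2$ analogue. Writing
\[
E_{2,h}(u,w_h)=\sum_{K\in\mathcal{T}_h}(\nabla^2u,\nabla^2w_h)_K-(f,w_h),
\]
the Strang lemma bounds $|u-u_h|_{2,h}$ by $C\big(\inf_{v_h\in W_h}|u-v_h|_{2,h}+\sup_{w_h\in W_h}|E_{2,h}(u,w_h)|/|w_h|_{2,h}\big)$. The approximation term is immediate from (\ref{e: interp err 2}): taking $v_h=\mathcal{J}_hu$ gives $\inf_{v_h\in W_h}|u-v_h|_{2,h}\leq|u-\mathcal{J}_hu|_{2,h}\leq Ch|u|_3$. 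Hence everything reduces to estimating the consistency error $E_{2,h}(u,w_h)$, which is where the work lies.

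Because $u\in H^3(\Omega)$ only, I would integrate by parts exactly once, so that no ill-defined trace of $\partial\Delta u/\partial\bm{n}$ is produced. Treating $\nabla u$ as a vector field yields, on each $K$, the identity $(\nabla^2u,\nabla^2w_h)_K=-\int_K\nabla\Delta u\cdot\nabla w_h\,\mathrm{d}x+\int_{\partial K}(\nabla^2u\,\bm{n})\cdot\nabla w_h\,\mathrm{d}\sigma$. Summing and using $\Delta^2u=f$, this splits $E_{2,h}(u,w_h)=T_1+T_2$ with
\[
T_1=\sum_{K\in\mathcal{T}_h}\int_{\partial K}(\nabla^2u\,\bm{n})\cdot\nabla w_h\,\mathrm{d}\sigma,\qquad T_2=-\sum_{K\in\mathcal{T}_h}\int_K\nabla\Delta u\cdot\nabla w_h\,\mathrm{d}x-(f,w_h).
\]
For $T_1$ I would reassemble the face contributions into interior-face jumps, $T_1=\sum_{F\in\mathcal{F}_h}\int_F(\nabla^2u\,\bm{n}_F)\cdot[\nabla w_h]_F\,\mathrm{d}\sigma$, the boundary faces being absorbed since the relevant traces of $w_h$ vanish there. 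The key is that $\int_F[\nabla w_h]_F\,\mathrm{d}\sigma=\bm{0}$: the normal component vanishes by (\ref{e: face relation top 2}) together with the continuity of $\partial w_h/\partial\bm{n}_F$ at $M_F$, while the two tangential components vanish by (\ref{e: face relation 2}) and the vertex continuity of $w_h$ on the side faces $F_1,F_2,F_3$, and by the pairing identity (\ref{e: face equal 2}) on the horizontally matched faces $F_4,F_5$ --- exactly the mechanism behind the split into $I_1$ and $I_2$ in Theorem~\ref{th: converge 1}. Subtracting the $L^2(F)$-projection of $\nabla^2u\,\bm{n}_F$ onto constants, then Cauchy--Schwarz together with the trace/Bramble--Hilbert bound $\|\nabla^2u\,\bm{n}_F-\mathcal{P}_{0,F}(\nabla^2u\,\bm{n}_F)\|_{0,F}\leq Ch^{1/2}|u|_3$ and the gradient-jump bound $\|[\nabla w_h]_F\|_{0,F}\leq Ch^{1/2}|w_h|_{2,h}$ (a zero-mean jump controlled by an inverse-trace and Poincar\'e argument) yield $|T_1|\leq Ch|u|_3|w_h|_{2,h}$.

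The hard part is $T_2$, and it is also where the $h^2\|f\|_0$ term is born. The obstruction is that $g:=\Delta u$ lies only in $H^1(\Omega)$, so $T_2$ cannot be pushed to the boundary. Instead I would exploit the weak identity $(\nabla\Delta u,\nabla\varphi)=-(f,\varphi)$ for all $\varphi\in H_0^1(\Omega)$, which is merely a restatement of $\Delta^2u=f$. Let $w_h^c\in H_0^1(\Omega)$ be the standard $H^1$-conforming prism interpolant sharing the vertex values of $w_h$; it is globally continuous and vanishes on $\partial\Omega$ precisely because $w_h$ is single-valued at interior vertices and zero at boundary ones. Choosing $\varphi=w_h^c$ makes the conforming part of $T_2$ cancel, leaving $T_2=-\sum_{K\in\mathcal{T}_h}(\nabla\Delta u,\nabla(w_h-w_h^c))_K-(f,w_h-w_h^c)$. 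Since $w_h^c$ reproduces $P_1(K)$, the Bramble--Hilbert lemma gives $|w_h-w_h^c|_{1,h}\leq Ch|w_h|_{2,h}$ and $\|w_h-w_h^c\|_0\leq Ch^2|w_h|_{2,h}$, whence $|T_2|\leq |u|_3\cdot Ch|w_h|_{2,h}+\|f\|_0\cdot Ch^2|w_h|_{2,h}$. Collecting the two bounds yields $|E_{2,h}(u,w_h)|\leq Ch(|u|_3+h\|f\|_0)|w_h|_{2,h}$, and combined with the approximation estimate this is exactly (\ref{e: H2 2}). I expect the genuine difficulties to be the low regularity of $\Delta u$ handled above --- forcing the detour through a conforming companion --- and, within $T_1$, the careful pairing of the top and bottom faces via (\ref{e: face equal 2}).
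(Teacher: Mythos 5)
Your overall architecture is essentially the paper's: the Strang lemma, the approximation bound via (\ref{e: interp err 2}), and in particular your treatment of $T_2$ through a conforming vertex interpolant so that $(\nabla\Delta u,\nabla w_h^c)=-(f,w_h^c)$ cancels the conforming part --- your $w_h^c$ is the paper's $\Pi_hw_h$ and your $T_2$ is exactly its $J_1+J_2$, which is where the $h^2\|f\|_0$ term arises. Your handling of the side faces $F_1,F_2,F_3$ and of the normal component $\partial w_h/\partial x_3$ on the horizontal faces also matches the paper's $J_3$ and $J_{4,1}$.

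The gap is in $T_1$ on the horizontal faces: the claim $\int_F[\nabla w_h]_F\,\mathrm{d}\sigma=\bm{0}$ is false there for the tangential components $\partial w_h/\partial x_1,\partial w_h/\partial x_2$. If $F=F_{5,K_1}=F_{4,K_2}$, the two traces of $w_h$ on $F$ are quadratics that agree only at the three vertices (the shared normal-derivative DoF at $M_F$ does not constrain the trace), so $[w_h]_F$ is an arbitrary combination of the edge bubbles $\lambda_i\lambda_j$ and $\int_F\partial_{x_1}[w_h]_F\,\mathrm{d}\sigma\neq0$ in general. The identity (\ref{e: face equal 2}) cannot repair this at the level of a single face: it relates $F_4$ and $F_5$ of the \emph{same} cell $K$, not the two sides of a face shared by two cells. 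Without the zero-mean jump you cannot discard $\mathcal{P}_{0,F}(\nabla^2u\,\bm{n}_F)$, and $\|[\nabla w_h]_F\|_{0,F}$ by itself is only of order $h^{-1/2}\|\nabla w_h\|_{0,K}$, so the factor $h$ you need is lost. The paper's mechanism for this piece ($J_{4,2}$) is genuinely different: first the sums $\sum_K\bigl(\int_{F_{5,K}}g\,\partial_{x_i}\mathcal{J}_{F_{5,K}}w_h-\int_{F_{4,K}}g\,\partial_{x_i}\mathcal{J}_{F_{4,K}}w_h\bigr)$ telescope to zero along each vertical stack by vertex continuity and the boundary conditions; then (\ref{e: face equal 2}) is used \emph{within each cell} to insert the cell-wise constant $\mathcal{P}_{0,K}\,\partial^2u/\partial x_i\partial x_3$ (legitimate because the top and bottom faces are congruent and carry the same function $w_h-\mathcal{J}_Fw_h$); finally $|w_h-\mathcal{J}_{F}w_h|_{1,F}$ is bounded by an inverse inequality combined with (\ref{e: interp face err 2}). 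You identified the right lemma but applied it at the wrong level: the argument must be a per-cell (and per-column) pairing, not a per-face jump cancellation.
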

\begin{proof}
Let us again consider the Strang lemma
\begin{equation}
\label{e: Strang 2}
|u-u_h|_{2,h}\leq C\left(\inf_{v_h\in W_h}|u-v_h|_{2,h}
+\sup_{w_h\in W_h}\frac{|E_{2,h}(u,w_h)|}{|w_h|_{2,h}}\right)
\end{equation}
with
\[
E_{2,h}(u,w_h)=\sum_{K\in\mathcal{T}_h}(\nabla^2 u,\nabla^2 w_h)_K-(f,w_h).
\]
Since (\ref{e: interp err 2}) gives
\[
\inf_{v_h\in W_h}|u-v_h|_{2,h}\leq|u-\mathcal{J}_hu|_{2,h}\leq Ch|u|_3,
\]
we turn to the consistency error $E_{2,h}(u,w_h)$.

For any $K\in\mathcal{T}_h$,
let $\Pi_K$ be the nodal interpolation operator from $H^{3/2+s}(K)$ to $\mathrm{span}\{\lambda_i,~i=1,2,3\}\times\mathrm{span}\{1,x_3\}$
such that $\Pi_Kv(V_i)=v(V_i)$, $i=1,2,\ldots,6$.
Then
\begin{equation}
\label{e: h1 interp err 2}
|v-\Pi_Kv|_{j,K}\leq Ch^{2-j}|v|_{2,K},~\forall v\in H^2(K),~j=0,1,2,
\end{equation}
and the global interpolation $\Pi_h|_K:=\Pi_K$, $\forall K\in\mathcal{T}_h$ will be $H^1$-conforming.
As before, we will adopt the convention that $F_{i,K}$ denotes $F_i\subset\partial K$ for $K\in\mathcal{T}_h$ in our
forthcoming description, $i=1,2,\ldots,5$.
By a density argument and integrating by parts, one has
\begin{equation}
\label{e: E2h}
\begin{aligned}
E_{2,h}(u,w_h)=&\sum_{K\in\mathcal{T}_h}(\nabla^2 u,\nabla^2 w_h)_K-(f,\Pi_hw_h)-(f,w_h-\Pi_hw_h)\\
=&-\sum_{K\in\mathcal{T}_h}(\nabla\Delta u,\nabla(w_h-\Pi_hw_h))_K-(f,w_h-\Pi_hw_h)\\
&+\sum_{K\in\mathcal{T}_h}\sum_{i,j=1}^3\int_{F_{1,K}\cup F_{2,K}\cup F_{3,K}}
\frac{\partial^2u}{\partial x_i\partial x_j}\frac{\partial w_h}{\partial x_i}n_j\,\mathrm{d}\sigma\\
&+\sum_{K\in\mathcal{T}_h}\sum_{i,j=1}^3\int_{F_{4,K}\cup F_{5,K}}
\frac{\partial^2u}{\partial x_i\partial x_j}\frac{\partial w_h}{\partial x_i}n_j\,\mathrm{d}\sigma\\
:=&J_1+J_2+J_3+J_4,
\end{aligned}
\end{equation}
where $\bm{n}=(n_1,n_2,n_3)^T$.
It follows from (\ref{e: h1 interp err 2}) that
\begin{equation}
\label{e: J12}
\begin{aligned}
|J_1|&\leq |u|_3|w_h-\Pi_hw_h|_{1,h}\leq Ch|u|_3|w_h|_{2,h},\\
|J_2|&\leq \|f\|_0\|w_h-\Pi_hw_h\|_0\leq Ch^2\|f\|_0|w_h|_{2,h},
\end{aligned}
\end{equation}
therefore it suffices to estimate $J_3$ and $J_4$.
Indeed, the weak continuity of $w_h$ and (\ref{e: face relation top 2}), (\ref{e: face relation 2}) imply that
\[
\int_F[\nabla w_h]_F\,\mathrm{d}\sigma=0,~\mbox{if $F=F_{i,K}$ for some $K\in\mathcal{T}_h$, $i=1,2,3$}.
\]
Hence,
\begin{equation}
\label{e: J3}
\begin{aligned}
|J_3|&=\left|\sum_{K\in\mathcal{T}_h}\sum_{k=1}^3\sum_{i,j=1}^3\int_{F_{k,K}}
\left(\frac{\partial^2u}{\partial x_i\partial x_j}-\mathcal{P}_{0,F_{k,K}}\frac{\partial^2u}{\partial x_i\partial x_j}\right)\left(\frac{\partial w_h}{\partial x_i}-\mathcal{P}_{0,F_{k,K}}\frac{\partial w_h}{\partial x_i}\right)n_j\,\mathrm{d}\sigma\right|\\
&\leq\sum_{K\in\mathcal{T}_h}\sum_{k=1}^3\sum_{i,j=1}^3\left\|\frac{\partial^2u}{\partial x_i\partial x_j}-\mathcal{P}_{0,F_{k,K}}\frac{\partial^2u}{\partial x_i\partial x_j}\right\|_{0,F_{k,K}}\left\|\frac{\partial w_h}{\partial x_i}-\mathcal{P}_{0,F_{k,K}}\frac{\partial w_h}{\partial x_i}\right\|_{0,F_{k,K}}\\
&\leq\sum_{K\in\mathcal{T}_h}Ch^{1/2}|u|_{3,K}Ch^{1/2}|w_h|_{2,K}
\leq Ch|u|_3|w_h|_{2,h}.
\end{aligned}
\end{equation}
On the other hand, on faces $F_{4,K}$ and $F_{5,K}$, $n_1=n_2=0$, therefore
\[
\begin{aligned}
J_4&=\sum_{K\in\mathcal{T}_h}\sum_{k=4}^5\int_{F_{k,K}}
\frac{\partial^2u}{\partial x_3^2}\frac{\partial w_h}{\partial x_3}n_3\,\mathrm{d}\sigma
+\sum_{K\in\mathcal{T}_h}\sum_{k=4}^5\sum_{i=1}^2\int_{F_{k,K}}
\frac{\partial^2u}{\partial x_i\partial x_3}\frac{\partial w_h}{\partial x_i}n_3\,\mathrm{d}\sigma\\
&:=J_{4,1}+J_{4,2}.
\end{aligned}
\]
A similar argument as in (\ref{e: J3}) using the weak continuity of $w_h$ and (\ref{e: face relation top 2}) derives
\begin{equation}
\label{e: J41}
|J_{4,1}|\leq Ch|u|_3|w_h|_{2,h}.
\end{equation}
As far as $J_{4,2}$ is concerned,
if $F=F_{5,K_1}=F_{4,K_2}$ for some $K_1$ and $K_2$,
the weak continuity of $w_h$ across $F$ gives
$\mathcal{J}_{F_{5,K_1}}(w_h|_{K_1})=\mathcal{J}_{F_{4,K_2}}(w_h|_{K_2})$,
which yields
\[
\begin{aligned}
|J_{4,2}|&=\Bigg|\sum_{K\in\mathcal{T}_h}\sum_{i=1}^2\Bigg(
\int_{F_{5,K}}\frac{\partial^2 u}{\partial x_i\partial x_3}
\left(\frac{\partial w_h}{\partial x_i}-\frac{\partial \mathcal{J}_{F_{5,K}}w_h}{\partial x_i}\right)\,\mathrm{d}\sigma\\
&~~~~~~~~~~~~~~~~~~-\int_{F_{4,K}}\frac{\partial^2 u}{\partial x_i\partial x_3}
\left(\frac{\partial w_h}{\partial x_i}-\frac{\partial \mathcal{J}_{F_{4,K}}w_h}{\partial x_i}\right)\,\mathrm{d}\sigma\Bigg)\Bigg|.
\end{aligned}
\]
Then using (\ref{e: face equal 2}), the inverse inequality and (\ref{e: interp face err 2}), we find
\begin{equation}
\label{e: J42}
\begin{aligned}
|J_{4,2}|&=\Bigg|\sum_{K\in\mathcal{T}_h}\sum_{i=1}^2\Bigg(
\int_{F_{5,K}}\left(\frac{\partial^2 u}{\partial x_i\partial x_3}-
\mathcal{P}_{0,K}\frac{\partial^2 u}{\partial x_i\partial x_3}\right)
\left(\frac{\partial w_h}{\partial x_i}-\frac{\partial \mathcal{J}_{F_{5,K}}w_h}{\partial x_i}\right)\,\mathrm{d}\sigma\\
&~~~~~~~~~~~~~~~~~~-\int_{F_{4,K}}\left(\frac{\partial^2 u}{\partial x_i\partial x_3}-
\mathcal{P}_{0,K}\frac{\partial^2 u}{\partial x_i\partial x_3}\right)
\left(\frac{\partial w_h}{\partial x_i}-\frac{\partial \mathcal{J}_{F_{4,K}}w_h}{\partial x_i}\right)\,\mathrm{d}\sigma\Bigg)\Bigg|\\
&\leq \sum_{K\in\mathcal{T}_h}\sum_{k=4}^5\sum_{i=1}^2
\left\|\frac{\partial^2 u}{\partial x_i\partial x_3}-\mathcal{P}_{0,K}\frac{\partial^2 u}{\partial x_i\partial x_3}\right\|_{0,F_{k,K}}
\left|w_h-\mathcal{J}_{F_{5,K}}w_h\right|_{1,F_{k,K}}\\
&\leq \sum_{K\in\mathcal{T}_h}\sum_{k=4}^5\sum_{i=1}^2
Ch^{1/2}\left|\frac{\partial^2 u}{\partial x_i\partial x_3}\right|_{1,K}Ch^{-1}
\left\|w_h-\mathcal{J}_{F_{5,K}}w_h\right\|_{0,F_{k,K}}\\
&\leq \sum_{K\in\mathcal{T}_h}\sum_{k=4}^5\sum_{i=1}^2
Ch^{1/2}|u|_{3,K}Ch^{-1}Ch^{3/2}|w_h|_{2,K}
\leq Ch|u|_3|w_h|_{2,h}.
\end{aligned}
\end{equation}
Substituting (\ref{e: J12})--(\ref{e: J42}) into (\ref{e: E2h}), one has
\[
|E_{2,h}(u,w_h)|\leq Ch(|u|_3+h\|f\|_0)|w_h|_{2,h},
\]
which completes the proof.
\end{proof}

\begin{remark}
We again consider a mixed mesh $\mathcal{T}_h$ consisting of both triangular prisms and bricks.
This element works for triangular prism cells,
while one can select the 3D rectangular Morley element \cite{Wang2007} designed by Wang, Shi and Xu over brick cells.
The error estimate (\ref{e: H2 2}) still holds.
\end{remark}

\section{Numerical examples}
\label{s: numer}

Numerical examples are provided in this section for both elements and their corresponding model problems.
Let the solution domain $\Omega$ be the unit cube $[0,1]^3$.
For each even number $n>0$,
a suitable triangular prism partition $\mathcal{T}_h$ associated with $n$ can be generated as follows.
In the first step, we construct an $n\times n$ uniform trapezoidal partition over the 2D region $[0,1]^2$,
and then divide each trapezoidal cell into two triangles,
so that a 2D triangulation $\mathcal{T}_h^{\mathrm{2D}}$ is obtained.
See Figure \ref{fig: subfig: 2D} as an example for $n=4$.
The second step is to uniformly divide the $[0,1]$ interval into $n$ pieces to obtain $\mathcal{T}_h^{\mathrm{1D}}$,
and then we can construct $\mathcal{T}_h$ by setting
\[
\mathcal{T}_h=\left\{K\subset\Omega:~K=K^{\mathrm{2D}}\times K^{\mathrm{1D}},
~K^{\mathrm{2D}}\in\mathcal{T}_h^{\mathrm{2D}},~K^{\mathrm{1D}}\in\mathcal{T}_h^{\mathrm{1D}}\right\}.
\]
Figure \ref{fig: subfig: mesh} illustrates an example of $\mathcal{T}_h$ when $n=4$.

\begin{figure}[!htb]
\centering
\subfigure[The 2D triangulation $\mathcal{T}_h^{\mathrm{2D}}$.] {
\label{fig: subfig: 2D}
\includegraphics[scale=0.40]{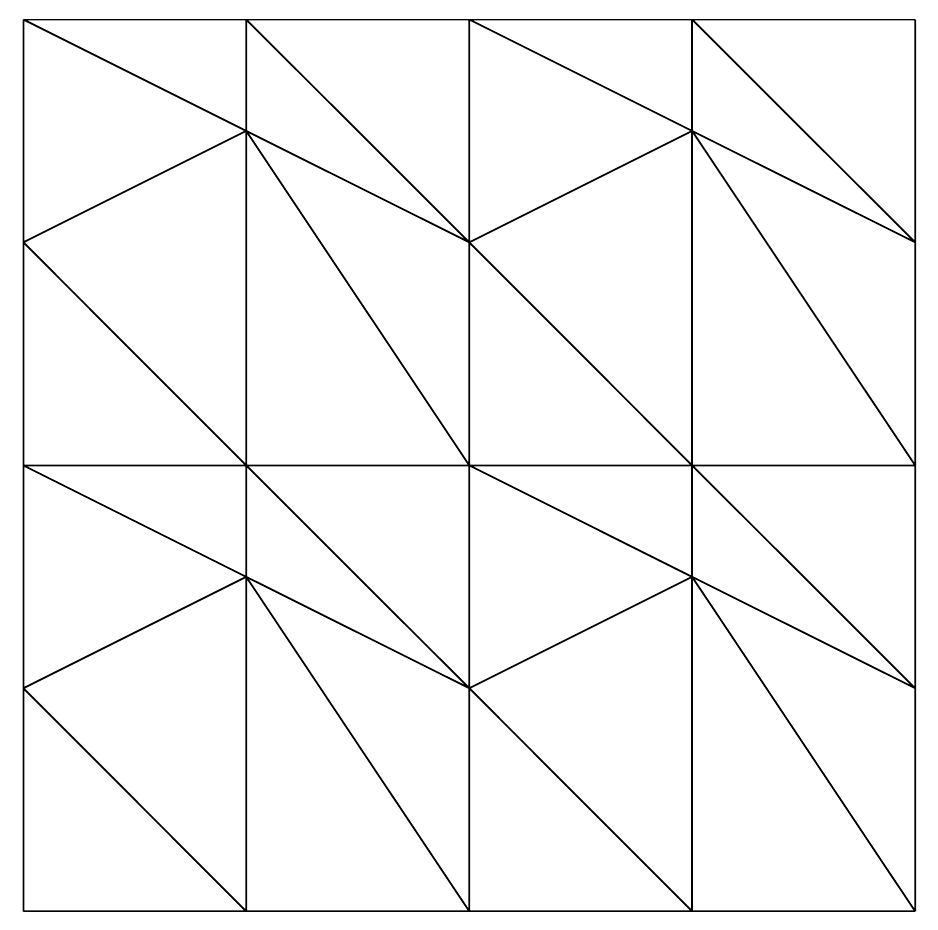}
}
\hspace{0.5cm}
\subfigure[The partition $\mathcal{T}_h$ over $\Omega$.] {
\label{fig: subfig: mesh}
\includegraphics[scale=0.22]{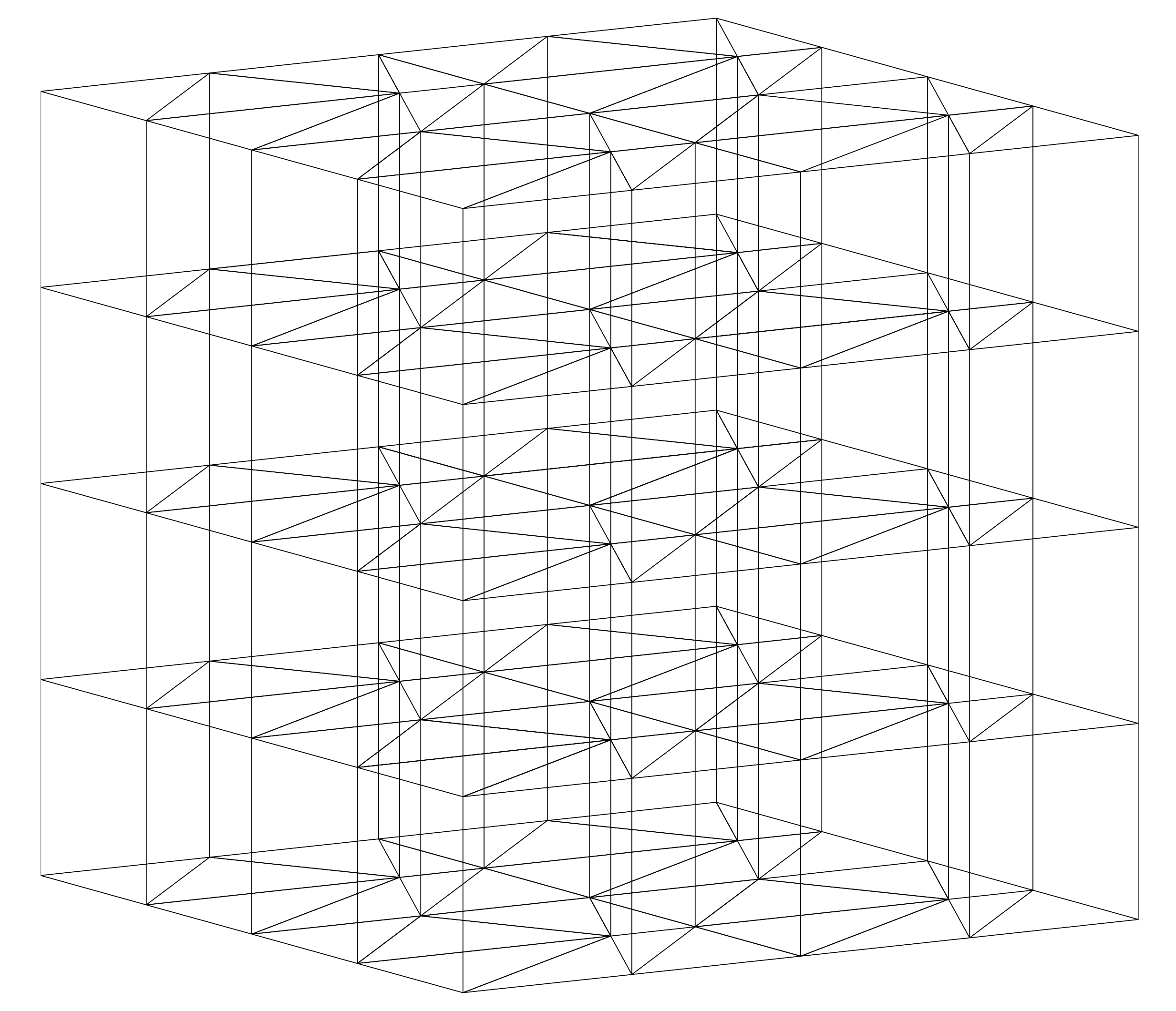}
}
\caption{Generation of the partition $\mathcal{T}_h$ when $n=4$.
\label{fig: partitions}}
\end{figure}

Let us first check the performance of the first element for 3D Poisson problems.
The exact solution of (\ref{e: Poisson}) is given by
\begin{equation}
\label{e: example 1}
u=\exp(x_1-2\pi x_2+3\pi x_3)\sin(2\pi x_2)\sin(3\pi x_3)(x_1^2-x_1^3).
\end{equation}
In Table \ref{t: example 1},
we list the errors in discrete $H^1$-norm $|u-u_h|_{1,h}$ with various $n$.
As predicted in Theorem \ref{th: converge 1},
a second order convergent behavior is observed.
The $L^2$ errors are also provided with a third order convergence rate,
which is consistent with (\ref{e: L2 1}).

\begin{table}[!htb]
\begin{center}
\begin{tabular}{p{1cm}<{\centering}p{2cm}<{\centering}p{1cm}<{\centering}p{2cm}<{\centering}p{1cm}<{\centering}}
\toprule
$n$ & $|u-u_h|_{1,h}$ & order & $\|u-u_h\|_0$ & order \\
\midrule
$4$      &4.035E2     &       &1.312E1    &          \\
$8$      &2.272E2     &0.83   &4.394E0    &1.58      \\
$16$     &7.840E1     &1.54   &7.557E-1   &2.54      \\
$32$     &2.142E1     &1.87   &9.250E-2   &3.03      \\
$64$     &5.480E0     &1.97   &1.107E-2   &3.06      \\
\bottomrule
\end{tabular}
\caption{The discrete $H^1$ and $L^2$ errors produced by $V_h$ applied to the Poisson problem determined through (\ref{e: example 1}) for various $n$.
 \label{t: example 1}}
\end{center}
\end{table}

In what follows, we turn to the performance of the second element applied to 3D fourth order elliptic problems.
The exact solution of (\ref{e: model problem}) is determined by
\begin{equation}
\label{e: example 2}
u=(1+\cos((2x_1-1)\pi))(1+\cos((2x_2-1)\pi)) (1+\cos((2x_3-1)\pi)).
\end{equation}
Table \ref{t: example 2} shows the errors $|u-u_h|_{2,h}$.
Again a first order convergence rate is achieved as predicted in Theorem \ref{th: converge 2}.
We also notice that the discrete $H^1$ and $L^2$ errors are of second order convergence.

\begin{table}[!htb]
\begin{center}
\begin{tabular}{p{1cm}<{\centering}p{2cm}<{\centering}p{1cm}<{\centering}p{2cm}<{\centering}p{1cm}<{\centering}
p{2cm}<{\centering}p{1.5cm}<{\centering}}
\toprule
$n$      &$|u-u_h|_{2,h}$ & order & $|u-u_h|_{1,h}$ & order &$\|u-u_h\|_0$ & order  \\
\midrule
$4$      &5.780E1     &       &4.523E0    &               &9.524E-1     &       \\
$8$      &3.254E1     &0.83   &1.664E0    &1.44           &3.346E-1     &1.51   \\
$16$     &1.693E1     &0.94   &4.692E-1   &1.83           &9.242E-2     &1.86   \\
$32$     &8.568E0     &0.98   &1.216E-1   &1.95           &2.377E-2     &1.96   \\
$64$     &4.297E0     &1.00   &3.070E-2   &1.99           &5.990E-3     &1.99   \\
\bottomrule
\end{tabular}
\caption{The discrete $H^2$, $H^1$ and $L^2$ errors produced by $W_h$ applied to the fourth order elliptic problem determined by (\ref{e: example 2}) for various $n$.
 \label{t: example 2}}
\end{center}
\end{table}

\end{document}